\documentclass[a4paper,11pt,reqno]{article}

\usepackage{comment}
\usepackage{latexsym}
\usepackage{amssymb,amsbsy,amsmath,amsfonts,amssymb,amscd,amsthm}
\usepackage{accents}	
\usepackage{graphicx}
\graphicspath{{./}{Figure/}}
\usepackage{hyperref}
\usepackage{dsfont}
\usepackage{mathtools}
\usepackage{pdfsync}
\usepackage{epsfig,epstopdf}
\usepackage{caption}
\usepackage{xcolor}
\usepackage{hyperref}
\usepackage[title]{appendix}
\usepackage{subcaption}
\usepackage{float}
\usepackage{color}
\usepackage{verbatim}
\usepackage{dsfont}
\usepackage[inline]{enumitem}
\usepackage{hyperref}

\usepackage{geometry}

\usepackage{enumitem}

\newtheorem{prop}[]{Proposition}
\newtheorem{rmrk}[]{Remark}
\newtheorem{lem}[]{Lemma}
\newtheorem{thrm}[]{Theorem}

\newcommand{\R}{\mathbb{R}}
\newcommand{\X}{\mathcal{X}}
\newcommand{\V}{\mathcal{V}}

\let\t=\tilde

\newcommand{\beq}{\begin{equation}}
\newcommand{\eeq}{\end{equation}}
\newcommand{\beqa}{\begin{eqnarray}}
\newcommand{\eeqa}{\end{eqnarray}}

\newcommand{\di}{\, {\rm d}}
\newcommand {\e}  {\varepsilon}
\newcommand{\pa}{\partial}

\numberwithin{equation}{section}
\numberwithin{prop}{section}
\numberwithin{ass}{section}
\numberwithin{rmrk}{section}

\title{Optimizing vaccine allocation in an age-structured SIR model}

\author{Luis Almeida\thanks{Sorbonne Université, Université Paris Cité, CNRS, Laboratoire de Probabilités, Statistique et Modélisation, LPSM, F-75005 Paris, France}
\and
Romain Ducasse \thanks{Université Paris Cité and Sorbonne Université, CNRS, LJLL, F-75005 Paris, France} \and
Elisa Paparelli\thanks{Sorbonne Université, Université Paris Cité, CNRS, Laboratoire Jacques-Louis Lions, LJLL, F-75005 Paris, France}
}

\begin{document}
\date{}
\maketitle

\noindent {\textbf{Keywords:} Nonlinear integral equations, epidemiology, SIR models, Optimal control.} \\

\noindent {\textbf{MSC:} 45M05, 35R09, 92D30, 49N90.}

\begin{abstract}
 We study an optimal control problem where the objective is to find the best vaccine allocation during an epidemic outbreak. The epidemic dynamics is described by an age-structured SIR model with nonlocal interactions. Both the infection and death rates depend on the age of the individuals, reflecting the effect of heterogeneities within the population.

 Our model includes a vaccination term, depending on time and age, which serves as a control function. The aim is to minimize the impact of the epidemic, that is, the number of casualties, under the constraint of limited vaccine supply.

 In a first part, we show that our optimization problem is equivalent to another {\em static} optimization problem. We then use this new optimization problem to obtain qualitative properties for the optimal allocations of vaccines.
\end{abstract}

\section{Introduction}

\subsection{The SIR model with vaccination}\label{sec21}

In $1927$, in a series of celebrated papers, Kermack and McKendrick \cite{kermack1927, kermack1932, kermack1933}, introduced several deterministic models designed to describe the temporal development of a disease in a population, among which the celebrated \emph{SIR} model, which became a cornerstone of mathematical epidemiology. In this model, the population under consideration is divided into three {\em compartments}: the \emph{susceptible} (who are not infected), the {\em infectious} (who have the disease and can transmit it to the susceptible) and the {\em recovered/deceased} (who had the disease and are now immune, or dead - in  any case, they do not play any role in the propagation of the disease). \\

In the present paper, we consider an age-structured model in which each age group exhibits distinct susceptibility and mortality characteristics, and where a vaccination strategy is applied. We study the optimal control problem aiming at finding the best way to allocate vaccines in order to minimize the outcome of the epidemic.\\

The model we consider is the following system of integro-differential equations
\begin{equation}\label{syst}
\begin{cases}
     \displaystyle{\partial_t S(t,x) = -S(t,x)\int_{\X} \beta(x,y)\,I(t,y)\,\di y - \nu(t,x),\quad \quad t>0,\ x\in \X}, \\
     \displaystyle{\partial_t I(t,x) = S(t,x)\int_{\X} \beta(x,y)\,I(t,y)\,\di y - \mu(x)I(t,x),\quad t>0,\ x\in \X},
\end{cases}
\end{equation}
with given initial data that represent the initial distribution of populations:
$$
S(0,x)\coloneqq S_0(x),\quad I(0,x)\coloneqq I_0(x).
$$
The functions $S(t,x), I(t,x)$ are the number of susceptible and infectious individuals respectively, at time $t>0$ and age $x$. We take $x \in \X$, where $\X$ is a compact interval in $\R$, say $\X = [0,A]$ for instance, for some $A>0$.\\

The function $\beta(x,y)$ represents the {\em rate of transmission/infection} when a susceptible individual with age $x$ meets an infected individual with age $y$. The larger it is, the more likely the susceptible individual is infected. The quantity $\beta$ accounts both for the likelihood that a contact between individuals aged $x$ and $y$ occurs and for the likelihood that this leads to a contamination. We mention that a priori $\beta$ need not be symmetric.\\

The quantity $\mu(x)$ is the mortality rate of infected individuals aged $x$. The larger $\mu(x)$, the faster the individuals aged $x$ die after their infection. Once they are dead, these individuals do not play any role in the dynamic (they can not contaminate anymore).\\

The function $\nu(t,x)$ represents the rate of vaccination of the susceptible aged $x$ at time $t>0$.\\

The first goal of the present paper is to study {\em how to choose the vaccination strategy $\nu$ in order to minimize the impact of the epidemic}, which is here chosen to be measured by the number of deaths. We shall call this the {\bf Optimal Vaccination Problem}, (OVP). It is stated rigorously below as \eqref{ocp}.

Since the direct application of general control theory tools is not straightforward in this context, we will show that the Optimal Vaccination Problem is equivalent to what we shall call the {\bf Initial Vaccination Problem} (IVP), stated below as \eqref{ocp:equivalent}. In the Initial Vaccination Problem, we can vaccinate a given number of individuals {\em before} the disease starts to propagate. The problem consist in finding which individuals should be vaccinated in order to minimize the epidemic outcome. The difference between these two problems is that in the Optimal Vaccination Problem, we can vaccinate the individuals at any time, whereas in the Initial Vaccination Problem, we vaccinate only at the start of the epidemic.\\

The equivalence between the (OVP) and the (IVP) will be established in the first part of the paper. The proofs rely on rewriting the SIR model as a nonlinear integral equation and on the careful use of comparison principles.

In the second part of the paper, we completely solve in a specific case the (IVP), and the (OVP), using their equivalence.\\

Our results will allow us to obtain qualitative properties on the model, see Remark \ref{req quali} below. Roughly speaking, we shall see that an optimal strategy for the (OVP) is to vaccinate {\em as soon as possible} and to target all the individuals in some specific age class.

\begin{rmrk}
    It is sometimes convenient to add in the system \eqref{syst} two other equations that describe the evolution of the number of vaccinated and deceased individuals with age $x$ at time $t>0$, denoted respectively $V(t,x)$ and $R(t,x)$. The full system would be
\begin{equation*}
\left\{
\begin{array}{rll}
     \partial_t S(t,x) &= -S(t,x)\int_{\X} \beta(x,y)\,I(t,y)\,\di y - \nu(t,x),\quad &t>0,\ x\in \X, \\
     \partial_t I(t,x) &= S(t,x)\int_{\X} \beta(x,y)\,I(t,y)\,\di y - \mu(x)I(t,x),\quad &t>0,\ x\in \X,\\
     \partial_t V(t,x) &= \nu(t,x),\quad &t>0,\ x\in \X,\\
     \partial_t R(t,x) &= \mu(x)I(t,x),\quad &t>0,\ x\in \X.
\end{array}
\right.
\end{equation*}
We have the following graphical representation of the evolution, showing all the compartments.
\begin{center}
    \includegraphics[scale=0.5]{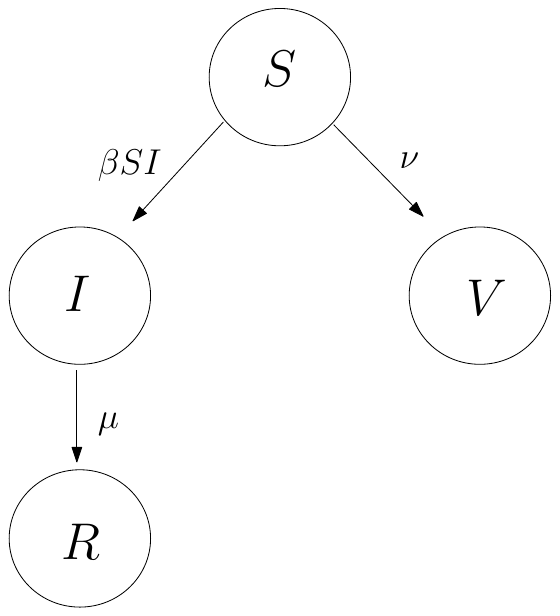}
\end{center}
At time $t=0$, almost all individuals are susceptible, some are infectious (usually, a very small amount). Then, the individuals move from one compartment to another.\\

Observe that, for each age $x$, the total population $P(t,x) \coloneqq S(t,x)+I(t,x)+R(t,x)+V(t,x)$ is conserved, the individuals only move from one compartment to another during the evolution, they never disappear. 
\end{rmrk}

\subsection{The SIR model}\label{sec sir classical}

Before diving into the study of our model \eqref{syst}, let us mention some salient results concerning the ``classical" SIR system, that is, without the vaccination term $\nu$. This will be the occasion to recall some technical facts that we shall use in our proofs.\\

When there is no vaccination in \eqref{syst}, we have the classical SIR model with non-local contamination
\begin{equation}\label{SIR original}
\left\{
\begin{array}{rll}
     \displaystyle{\partial_t S(t,x)} &= \displaystyle{-S(t,x)\int_{\X} \beta(x,y)\,I(t,y)\,\di y} , &t>0,\ x\in \X, \\
     \displaystyle{\partial_t I(t,x)} &= \displaystyle{S(t,x)\int_{\X} \beta(x,y)\,I(t,y)\,\di y - \mu(x)I(t,x)}, &t>0,\ x\in \X.
\end{array}
\right.
\end{equation}
This model was first introduced by Kendall \cite{kendall1965mathematical}, as a spatial generalization of the SIR model of Kermack and McKendrick \cite{kermack1927}: the variable $x$ was meant to be a position in a physical space. Several authors studied the {\em spreading properties} of this model, see \cite{diekmann1978thresholds,ducasse2022threshold,thieme1977model} for some results concerning the existence of traveling waves.\\

The main question concerning system \eqref{SIR original} is wether or not it predicts that a disease would propagate, leading to an epidemic. In order to state this rigorously, let us define what ``propagation" means in this context.\\

We say that we have propagation of the epidemic if, for every value of the age variable $x$, there is a strictly positive proportion of the population that gets infected, {\em no matter how small} $I_0$ is, that is, we have propagation of the epidemic if and only if there is $\eta>0$ such that, for every $I_0$ continuous, non-negative, non-zero, we have, denoting $S_\infty(x) := \lim_{t\to + \infty} S(t,x)$ (this represents the number of individuals who eventually survive the disease), 
$$
\frac{S_\infty(x)}{S_0(x)} \leq 1 - \eta, \quad \forall x \in \X. 
$$
On the other hand, we say that the disease does not spread if and only, provided the initial number of infectious is small
, the proportion of contamination is small. More precisely, for every $\eta>0$, there is $\e>0$ such that, for all $I_0$ such that $\|I_0\|_{L^\infty}\leq \e$, we have
$$
\frac{S_\infty(x)}{S_0(x)}\geq 1-\eta,\quad \forall x\in \X.
$$
The main result on the SIR system \eqref{SIR original} is the ``threshold effect", which states that there is a quantity $\lambda_1 \in \R$, depending on the parameters of the system $S_0,\beta,\mu$, whose sign determines wether or not we have {\em propagation} of the disease or not.

\begin{thrm}\label{th classical}
Let $S_0,\beta,\mu$ be fixed, continuous and positive on $\X$.

Let $\lambda_1$ be the principal eigenvalue of the operator $L \in \mathcal{L}(C^0(\X))$ defined as
$$
L \ : \ \phi  \mapsto \phi - \int_y \frac{\beta(\cdot,y)S_0(y)}{\mu(y)}\phi(y)\di y.
$$
Then, the sign of $\lambda_1$ determines wether or not the epidemic spreads:
\begin{itemize}
    \item If $\lambda_1 \geq 0$, the epidemic does not spread.  
    \item If $\lambda_1<0$, the disease spreads, there is an epidemic.
\end{itemize}
\end{thrm}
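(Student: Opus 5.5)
The plan is to reduce the SIR system \eqref{SIR original} to a scalar nonlinear integral equation for the final state, and then analyse that equation near the disease-free state $S_0$ using the principal eigenpair of $L$. Integrating the first equation in time gives
$$
S(t,x)=S_0(x)\exp\Big(-\int_{\X}\beta(x,y)\int_0^t I(s,y)\,\di s\,\di y\Big).
$$
Adding the two equations and integrating shows that $\mu(y)\int_0^\infty I(s,y)\,\di s = S_0(y)+I_0(y)-S_\infty(y)$; this quantity is finite because $I\to0$ and $S$ is nonincreasing and bounded. Setting $u(x):=\ln\big(S_0(x)/S_\infty(x)\big)\ge0$ (final size in logarithmic form), we obtain
$$
u(x)=\int_{\X}\frac{\beta(x,y)}{\mu(y)}\Big(S_0(y)\big(1-e^{-u(y)}\big)+I_0(y)\Big)\di y =: \mathcal{T}u(x)+h(x),
$$
where $h\ge0$ is the term coming from $I_0$, with $\|h\|_\infty\le C\|I_0\|_\infty$.

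The operator $K\phi:=\int \beta(\cdot,y)S_0(y)\mu(y)^{-1}\phi(y)\,\di y$ is compact with a positive continuous kernel. By Krein--Rutman it has a principal eigenvalue $r>0$ with a positive eigenfunction $\phi_1$, and $\lambda_1=1-r$. I will also use the adjoint eigenfunction $\psi_1>0$, which satisfies $K^*\psi_1=r\psi_1$.

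\emph{Case $\lambda_1\ge0$, i.e.\ $r\le1$.} Pair the equation with $\psi_1$ and use $1-e^{-u}\le u-\tfrac{u^2}{2}e^{-\|u\|_\infty}$ (or simply $1-e^{-u}\le u - c u^2$ for bounded $u$, and $u$ is bounded a priori since $1-e^{-u}\le1$). This gives
$$
\langle \psi_1,u\rangle \le r\langle\psi_1,u\rangle - c\Big\langle \psi_1,\frac{\beta S_0}{\mu}u^2\Big\rangle' + \langle\psi_1,h\rangle,
$$
more precisely $c\int K^*\psi_1\, u^2 \le \langle \psi_1,h\rangle$. Hence $\|u\|_{L^2}^2\le C\|I_0\|_\infty$, and plugging this back into the integral equation (the kernel is bounded) yields $\|u\|_\infty\le C\|u\|_{L^1}+C\|I_0\|_\infty\to0$ as $\|I_0\|_\infty\to0$. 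Thus $S_\infty/S_0=e^{-u}\ge1-\eta$ once $\|I_0\|_\infty$ is small, so there is no spread.

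\emph{Case $\lambda_1<0$, i.e.\ $r>1$.} I would show that $u$ is bounded below by a positive constant independently of $I_0$. Since $h\ge0$ and $h\not\equiv0$ by positivity of $\beta$, we have $u\ge h>0$. The map $F(u)=\mathcal T u+h$ is monotone. For small $\varepsilon$, the function $\underline u=\varepsilon\phi_1$ is a strict subsolution of the homogeneous problem, because $\mathcal T(\varepsilon\phi_1)\ge \varepsilon r\phi_1(1-C\varepsilon)>\varepsilon\phi_1$. The key step, and the main obstacle, is to compare $u$ with $\varepsilon_0\phi_1$ for a \emph{fixed} $\varepsilon_0$, without knowing a priori that $u\ge\varepsilon_0\phi_1$. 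I would use a sliding argument. Let $\varepsilon^*=\sup\{\varepsilon\le\varepsilon_0: u\ge\varepsilon\phi_1\}$, which is positive since $u>0$ and $\X$ is compact. If $\varepsilon^*<\varepsilon_0$, then
$$
u\ge\mathcal T(\varepsilon^*\phi_1)+h>\varepsilon^*\phi_1(1+\delta)
$$
uniformly, with $\delta>0$ depending only on $\varepsilon_0$ and $r$. This contradicts maximality, so $u\ge\varepsilon_0\phi_1\ge\varepsilon_0\min\phi_1$. Setting $\eta=1-e^{-\varepsilon_0\min\phi_1}$ then gives the spreading conclusion for every nonzero $I_0$.

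A preliminary step is to justify that $I(t,\cdot)\to0$ and that $\int_0^\infty I<\infty$ follows from the conservation identity. This is routine by monotonicity of $S$ and boundedness of $S+I$.
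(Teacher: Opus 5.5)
The paper gives no proof of Theorem \ref{th classical}. It recalls it in the introduction as a classical threshold result for Kendall's nonlocal SIR model, with pointers to the literature, so there is no in-paper argument to compare against. Your proposal is a correct, self-contained proof.

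Your reduction to $u=\ln(S_0/S_\infty)$ solving $u=\mathcal T u+h$ is exactly \eqref{lim S} with $\nu\equiv0$, which Proposition \ref{th lim S} already provides. Both halves use mechanisms the paper employs elsewhere:
\begin{itemize}
\item Pairing with the positive adjoint eigenfunction $\psi_1$ mirrors the $e(t)$ argument in Step 2 of the proof of Proposition \ref{prop:tildeU}. You apply it to the static final-size equation rather than along the dynamics.
\item Sliding $\varepsilon\phi_1$ under $u$ is the same touching device as in Lemma \ref{stab comp}.
\end{itemize}

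Your route has three concrete advantages:
\begin{itemize}
\item The quadratic remainder $1-e^{-u}\le u-\tfrac12 e^{-M}u^2$ handles the critical case $\lambda_1=0$. A purely linear bound via $(\mathrm{Id}-K)^{-1}h$ would miss it, since $\mathrm{Id}-K$ is not invertible when $r=1$.
\item It gives the explicit rate $\|u\|_\infty\le C(\|I_0\|_\infty^{1/2}+\|I_0\|_\infty)$.
\item In the supercritical case, the lower bound $u\ge\varepsilon_0\min\phi_1$ depends only on $(S_0,\beta,\mu)$, and you never need uniqueness for the final-size equation.
\end{itemize}

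A few points to tidy:
\begin{itemize}
\item Your first display in the case $r\le1$ is garbled. It should read $(1-r)\langle\psi_1,u\rangle+rc\langle\psi_1,u^2\rangle\le\langle\psi_1,h\rangle$, with $c=\tfrac12e^{-M}$, where $M$ bounds $u$ uniformly for, say, $\|I_0\|_\infty\le1$. You also need $\min\psi_1>0$ to pass to the $L^2$ bound.
\item In the preliminary step the logical order should be reversed. First, $\int_0^t\mu I\,\di s=S_0+I_0-S(t)-I(t)\le S_0+I_0$ gives integrability of $I$. Then convergence of $S$ and of $S+I$ forces $I\to0$. This same bound is what makes $u$ finite, i.e.\ $S_\infty>0$.
\item In the sliding step, $u\ge\varepsilon^*\phi_1$ holds because the set of admissible $\varepsilon$ is closed. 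The initial positivity $\varepsilon^*>0$ comes from $u\ge h\ge\min h>0$. The concluding inequality only needs to be non-strict.
\end{itemize}
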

Let us recall some facts concerning the principal eigenvalue of such operator $L$, that we shall use later.

The principal eigenvalue of $L$ is the unique real number $\lambda_1$ such that there is $\phi_1 \in C^0(\X)$, with $\phi_1 > 0$, such that $L\phi_1 = \lambda_1 \phi_1$. The function $\phi_1$ is called a principal eigenfunction.

The existence of the principal eigenvalue is a consequence of the Krein-Rutman theorem \cite{KreinLinear48}, which also states that its multiplicity is one, and that it is the only eigenvalue associated with a positive eigenfunction: for every other eigenvalue $\lambda$ of $L$, if there is an eigenfunction $\phi$ such that $L\phi = \lambda \phi$ with $\phi >0$, then $\lambda = \lambda_1$. \\

Let us come back to the SIR model \eqref{SIR original} and to Theorem \ref{th classical}. An important corollary is that, in the homogeneous constant case, that is, when $\beta, S_0,\mu$ are constant functions, the principal eigenvalue of $L$ is explicit: it is $1- \frac{\beta S_0}{\mu}$. Therefore, the epidemic spreads if and only if
$$
1< \frac{\beta S_0}{\mu}.
$$
In this setting, the quantity $\frac{\beta S_0}{\mu}$ is called the {\em basic reproduction number}.\\

Let us mention briefly other works related to the SIR model of Kermack and McKendrick. Although we consider non-local interactions here, another classical way to build spatial models is to add diffusion terms in the equations: this leads to reaction-diffusion systems. The long-time behavior of such models was widely studied, see \cite{DucasseLaborde, ducrotgiletti, MurrayMathematical03a} and the references therein. The introduction of additional structure (such as age, phenotypical traits, etc) makes the study of the long-time behavior more intricate, we refer to \cite{DucasseNordmann, DucasseTreton} for some details in this direction.

We also mention that SIR models can be studied starting from individual-based models. This approach, based on a formalism from kinetic theory, proved useful to model different heterogeneities within the populations, see \cite{dimarco2021kinetic, tea23, martalo2026}.

Other optimal control problems in age-structured epidemic models were investigated. In most cases, vaccination is interpreted just like we do here, that is, as removing a portion of susceptible individuals, we refer to the works \cite{castillochavez1996, feichtinger2003, fister2016, muller2000} and the references therein.

\subsection{Optimizing the vaccination strategy}

Let us come back to our model \eqref{syst}. The question we want to answer is: {\bf what is the optimal vaccination strategy that would minimize the number of deaths in the population}, under the constraint that the number of vaccines available is limited. We call this problem the {\bf Optimal vaccination problem}. \\

{\bf The Optimal Vaccination Problem.}\\

Let $(S_0,I_0)$ be chosen and consider the solution $(S,I)$ of \eqref{syst} arising from the initial datum $(S_0,I_0)$. Let us denote $S_\infty(x) \coloneqq \lim_{t\to +\infty} S(t,x)$ (this quantity is well defined as we shall see). The quantity $\int_\X S_\infty(x)\di x$ represents the number of individuals that are still susceptible after the epidemic - these individuals where never contaminated, and they where never vaccinated neither.

On the other hand, the quantity $\int_{\X}\int_0^\infty \nu(t,x)\di t \di x$ represents the number of individuals that have been vaccinated (hence, they were never contaminated neither).

In the modeling we consider, all the individuals that are contaminated eventually die. Therefore, for a given choice of $\nu$, the number of surviving individuals is given by
$$
\mathcal N(\nu)\coloneqq\int_{\mathcal X} S_{\infty}(x)\,\di x + \int_{\mathcal X}\int_0^\infty \nu(t,x)\,\di t\,\di x.
$$
The main question that motivates our study is to maximize the function $\mathcal{N}$ on the set of admissible controls
$$
\V \coloneqq \{ \nu  \in L^1_t C^0_x\cap C^0_{t,x}, \ \nu\geq 0 \ : \ S(t,x) \geq 0,\quad \forall t>0,\ x \in \X\},
$$
under appropriate constraints on the number of vaccines. It is natural to consider the set $\V$ since we do not want the quantity $S(t,x)$ to become negative.\\

Then, assuming that we have a total quantity $K \in \R^+$ of vaccines, the Optimal Vaccination Problem is the following: 
\begin{equation}\label{ocp}
 \text{(OVP)  \quad \quad} \boxed{\text{  \text{Find\ } } 
\displaystyle{\sup_{\nu \in \V } \mathcal N(\nu)}\ \text{under the constraint\quad }\  \displaystyle\int_\X \int_0^{+\infty}\nu(t,x)\di t\di x \leq K.}
\end{equation}
As we mentioned in section \ref{sec21}, we will prove that this is equivalent to the following optimization problem.\\

{\bf The Initial Vaccination Problem.}\\

\noindent Assume that the initial density of susceptible and infectious individuals $(S_0,I_0)$ are given and assume that, {\em before the epidemic starts}, we can vaccinate a number $v(x)$ of individuals aged $x$. Therefore, the quantity of vaccines used is $\int_\X v(x)\di x$. We assume that $v(x) \leq S_0(x)$ for all $x \in \X$ (so that we do not vaccinate more individuals than there are).\\

Following the same modeling hypothesis as before, the number of susceptible and infectious individuals, that we denote here $(S^\star(t,x),I^\star(t,x))$, is solution of
\begin{equation}\label{syst 0}
\left\{
\begin{array}{rll}
     \partial_t S^\star(t,x) &= -S^\star(t,x)\int_{\X} \beta(x,y)\,I^\star(t,y)\,\di y ,\quad  &t>0,\ x\in \X, \\
     \partial_t I^\star(t,x) &= S^\star(t,x)\int_{\X} \beta(x,y)\,I^\star(t,y)\,\di y - \mu(x)I^\star(t,x),\quad &t>0,\ x\in \X,
\end{array}
\right.
\end{equation}
with initial datum
$$
S^\star(0,x)\coloneqq S_0(x) - v(x),\quad I^\star(0,x)\coloneqq I_0(x).
$$
This is the same system as \eqref{syst}, but with $\nu \equiv 0$ and with initial susceptible population $S_0 -v$: there is no vaccination during the course of the epidemic, the vaccination happens {\em before}, and is taken into account in the initial datum for $S_0$.\\

The set of admissible controls for this problem will be
\begin{equation*} 
 \V^\star \coloneqq \{v \in L^1(\X) \ : \ 0\leq v\leq S_0 \}.
\end{equation*}
We measure the impact of the epidemic by considering
$$
\mathcal{N}^\star(v) = \int_\X  S^\star_\infty(x)\di x + \int_\X v(x)\di x,
$$
 where $S^\star_\infty(x) = \lim_{t\to +\infty}S^\star(t,x)$. This measures the number of surviving individuals. \\

The {\em initial vaccination problem} consists in finding the optimal vaccination strategy provided we vaccinate the individuals before the epidemic starts. In other terms, we want to find
\begin{equation}\label{ocp:equivalent}
 \text{(IVP)  \quad \quad} \boxed{\text{  \text{Find\ } } 
\displaystyle{\sup_{v \in \V^\star } \mathcal N^\star(v)}\ \text{under the constraint\quad }\  \displaystyle\int_\X v(x)\di x \leq K.}
\end{equation}
We shall prove that the two optimization problems \eqref{ocp} and \eqref{ocp:equivalent} are actually equivalent (in a sense to be made precise below), this is the object of Theorem \ref{th new opt}.\\

In some sense, the equivalence between these two problem implies that, provided the vaccines are available from the start of the epidemic, {\em an optimal strategy is to use them as soon as possible}. Although this may seem obvious {\em a priori}, the proof is somewhat involved - and it actually heavily relies on the specific shape of the equations in the SIR model. See Remark \ref{req dur} where this is discussed.

Moreover, using the equivalence between the (OVP) \eqref{ocp} and the (IVP) \eqref{ocp:equivalent}, we will be able to find optimal strategies in some specific cases. This is the object of Theorem \ref{th quali}.

\subsection{Hypotheses and main results}

In the sequel, $\X = [0,A]$ is a given interval of $\R$. We shall always assume that the following hold true:

\paragraph{Assumptions. }

\begin{enumerate} [label=\textbf{A.\arabic*},ref=A.\arabic*]
\item \label{(A1)}$S_0(x), I_0(x) \in C^{0} (\mathcal X)$, $S_0(x)>0, I_0(x)\geq 0$ for all $x\in \X$, and $I_0\not\equiv 0$.
\item \label{(A2)} $\beta \in C^0(\X\times\X)$ and $\beta(x,y) > 0$ for every $x,y \in \mathcal X$;
\item \label{(A3)} $\mu \in C^0(\X)$ and $\mu(x) > 0$ for every $x \in \mathcal X$;
\item \label{(A4)} $\nu \in \V$.
\end{enumerate}
We are now in position to state our main results. \\

We start with a technical result, that shows that the model \eqref{syst} is well-posed and that the solutions converge to a limiting state when $t$ goes to $+\infty$.\\
\begin{thrm}\label{th existence and convergence}
For $\nu \in \V$, the system \eqref{syst} has a unique solution $(S,I)$ with $S,I\geq 0$ and $S,I \in C^1(\R^+, C^0(\X))$.

Moreover, there is $S_\infty \in L^\infty(\X)$ such that, for all $x\in \X$,
    $$
    S(t,x) \underset{t\to +\infty}{\longrightarrow} S_\infty(x),\quad I(t,x)\underset{t\to +\infty}{\longrightarrow} 0.
    $$
In addition, for all $x\in \X$ where $S_\infty(x)>0$, we have
    \begin{multline}\label{lim S}
    S_\infty(x) = S_0(x)\exp\Bigg(\int_{\mathcal X}\frac{\beta(x,y)}{\mu(y)}\,S_\infty(y)\,\di y +\int_{\mathcal X}\frac{\beta(x,y)}{\mu(y)}\nu_\infty(y)\,\di y \\ -\int_0^\infty \frac{\nu(t,x)}{S(t,x)}\,\di t -\int_{\mathcal X}\frac{\beta(x,y)}{\mu(y)}\,(I_0+S_0)(y)\, \di y\Bigg),
    \end{multline}
    where
    \begin{equation}\label{def:vinfty}
    \nu_\infty(x)\coloneqq\int_{0}^\infty \nu(t,x)\,\di t.
    \end{equation}
\end{thrm}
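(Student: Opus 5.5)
Well-posedness will come from a fixed-point argument in $C^0(\X)$, followed by a priori bounds that make the solution global. Set $F(S,I)(x) \coloneqq S(x)\int_\X \beta(x,y)I(y)\di y$. This map is locally Lipschitz on $C^0(\X)^2$ because $\beta$ is continuous and bounded on the compact set $\X\times\X$. Since $\nu\in C^0_{t,x}$, the Cauchy--Lipschitz theorem in the Banach space $C^0(\X)^2$ gives a unique maximal solution in $C^1([0,T^*),C^0(\X))$.

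For the bounds, write $\Lambda(t,x)=\int_\X\beta(x,y)I(t,y)\di y$. Nonnegativity of $S$ on the existence interval is built into the definition of $\V$. Duhamel's formula
$$I(t,x)=I_0(x)e^{-\mu(x)t}+\int_0^t e^{-\mu(x)(t-s)}S(s,x)\Lambda(s,x)\di s$$
then shows $I\ge 0$; to avoid circularity, I would first run this on a short interval, or with $I^+$ inside $\Lambda$. The conserved quantity gives $S+I\le S_0+I_0$ pointwise, since $\partial_t(S+I)=-\nu-\mu I\le 0$. Hence the solution stays bounded in $C^0$, so $T^*=\infty$.

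For the long-time limits:
\begin{itemize}
\item $S(\cdot,x)$ is nonincreasing and nonnegative, so it converges to some $S_\infty(x)\in[0,S_0(x)]$. As a pointwise limit of continuous functions, $S_\infty$ is measurable and bounded, so $S_\infty\in L^\infty(\X)$.
\item $S+I$ is nonincreasing and nonnegative, so it converges. Hence $I(t,x)$ converges as well.
\item Integrating $\partial_t(S+I)=-\nu-\mu I$ over $(0,\infty)$ gives
$$\mu(x)\int_0^\infty I(t,x)\di t=S_0+I_0-S_\infty-(S+I)(\infty)-\nu_\infty\le S_0+I_0.$$
So $I(\cdot,x)\in L^1(0,\infty)$, and its limit must be $0$.
\end{itemize}

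For formula \eqref{lim S}, fix $x$ with $S_\infty(x)>0$. Then $S(t,x)\ge S_\infty(x)>0$ for all $t$, so we may divide by $S$:
$$\partial_t \log S = -\Lambda - \frac{\nu}{S}.$$
Integrating on $(0,T)$ and letting $T\to\infty$ by monotone convergence gives
$$\log\frac{S_\infty}{S_0}=-\int_\X\beta(x,y)\int_0^\infty I(t,y)\di t\di y-\int_0^\infty\frac{\nu(t,x)}{S(t,x)}\di t,$$
where Tonelli justifies the exchange of integrals. Finally, substitute the identity from the previous step at the point $y$:
$$\int_0^\infty I(t,y)\di t=\frac{S_0(y)+I_0(y)-S_\infty(y)-\nu_\infty(y)}{\mu(y)}.$$
This uses $(S+I)(\infty,y)=S_\infty(y)$, because $I\to0$. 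Exponentiating yields exactly \eqref{lim S}.

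The main obstacle is the bookkeeping around $\nu$. First, we must ensure $\nu_\infty(y)<\infty$; this follows from $\nu\in L^1_tC^0_x$, or directly from the integrated identity. Second, the term $\int_0^\infty\nu/S\di t$ must be finite when $S_\infty(x)>0$; this holds since $\nu/S\le\nu/S_\infty(x)$. The convergence of $S$ itself is not an issue, because the definition of $\V$ guarantees $S$ never becomes negative, which makes $S$ a bounded monotone function.
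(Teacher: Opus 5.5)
Your proposal is correct. The well-posedness part is essentially the paper's argument. The paper runs the Banach fixed point by hand on the integral formulation, then globalizes with the same bound $0\le S+I\le S_0+I_0$. Your appeal to Cauchy--Lipschitz in $C^0(\X)^2$ gives uniqueness among all $C^1(\R^+,C^0(\X))$ solutions at once, and your Duhamel/$I^+$ remark justifies $I\ge 0$ more carefully than the paper does.

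The genuine difference is in the limit formula \eqref{lim S}. The paper first proves the finite-time identity \eqref{eq u}. It writes $I$ through Duhamel in terms of $\partial_t S$ and $\nu$, inserts this into $-\partial_t S/S=\int_\X\beta I+\nu/S$, and integrates on $[0,t]$. It then lets $t\to+\infty$ using a Ces\`aro-type lemma ($\int_0^t f(t-\tau)e^{-m\tau}\di\tau\to f_\infty/m$) and dominated convergence in $y$. You instead integrate the conservation law $\partial_t(S+I)=-\nu-\mu I$ over the whole half-line. This gives $\mu(y)\int_0^\infty I(t,y)\di t$ explicitly at every $y$, including points where $S_\infty(y)=0$. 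You then combine it with the integrated logarithmic equation via monotone convergence and Tonelli.

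Your route is shorter and avoids the convolution lemma. It also genuinely proves $I(t,x)\to 0$, from $I(\cdot,x)\in L^1(0,\infty)$ together with the convergence of $S+I$; the paper states this in Proposition \ref{th lim S} but does not argue it separately. The paper's route buys an exact representation of $S(t,x)$ at every finite time, though that identity is not used elsewhere.

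One slip: in your third bullet the right-hand side should be $S_0+I_0-(S+I)(\infty)-\nu_\infty$, without the extra $-S_\infty$. Since you use the correct identity afterwards, nothing downstream is affected.
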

We now turn to the optimization problem \eqref{ocp}. Our next result tells us that the two optimization problems introduced before, the (OPV) \eqref{ocp} and the (IVP) \eqref{ocp:equivalent} are equivalent.

\begin{thrm}\label{th new opt}
    We have
    $$
    \sup_{\nu \in \V \ : \ \int_t\int_x \nu \leq K}\mathcal{N}(\nu) = \sup_{v \in  \V^\star \ : \ \int_{\X} v \leq K} \mathcal{N}^\star(v).
    $$
\end{thrm}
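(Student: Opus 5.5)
The proof establishes the two inequalities separately. The common tool is the cumulative force of infection
$$
\Phi(t,x)\coloneqq\int_0^t\int_\X \beta(x,y)\,I(s,y)\,\di y\,\di s ,
$$
and the corresponding $\Phi^\star$ for \eqref{syst 0}. For the initial vaccination problem with datum $v$, one has $S^\star(t,x)=e^{-\Phi^\star(t,x)}(S_0-v)(x)$. Integrating $\partial_t I+\mu I=-\partial_t S$ in time and letting $t\to+\infty$ (using Theorem \ref{th existence and convergence}, so that $I\to0$) shows that $\Phi^\star_\infty$ is a fixed point of
$$
T_v(\phi)(x)=\int_\X\frac{\beta(x,y)}{\mu(y)}\Big(S_0+I_0-v-e^{-\phi}(S_0-v)\Big)(y)\,\di y .
$$
The map $T_v$ is nondecreasing and concave, and $T_v(0)=\int\beta I_0/\mu>0$ since $I_0\not\equiv0$. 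The first step is the standard uniqueness argument for fixed points of such maps. If $\phi_1,\phi_2\ge T_v(0)>0$ are two fixed points, let $\theta$ be the largest number in $(0,1]$ with $\theta\phi_1\le\phi_2$. Concavity together with $T_v(0)>0$ gives $T_v(\theta\phi_1)>\theta T_v(\phi_1)$ when $\theta<1$, which contradicts the maximality of $\theta$. The same argument shows that any $\phi$ with $\phi\ge T_v(\phi)$ and $\phi>0$ satisfies $\phi\ge\Phi^\star_\infty$: iterating $T_v$ from $\phi$ produces a nonincreasing sequence bounded below by $T_v(0)$, whose limit is a positive fixed point and hence equals $\Phi^\star_\infty$.

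\emph{Inequality $\le$.} Let $\nu\in\V$ with $\int\!\!\int\nu\le K$, and set $v\coloneqq\nu_\infty$. Then $v\in\V^\star$ (we have $\nu_\infty\le S_0$ because $S\ge0$), and the constraint $\int v\le K$ holds. Write $V(t,x)=\int_0^t\nu$. Duhamel's formula for the $S$-equation gives
$$
S(t)=e^{-\Phi(t)}S_0-\int_0^t\nu(s)e^{-(\Phi(t)-\Phi(s))}\,\di s ,
$$
hence
$$
S+V=e^{-\Phi}S_0+\int_0^t\nu(s)\big(1-e^{-(\Phi(t)-\Phi(s))}\big)\,\di s\le e^{-\Phi(t)}(S_0-\nu_\infty)+\nu_\infty .
$$
Inserting this into $\mu\int_0^tI=I_0+S_0-I(t)-S(t)-V(t)$, then multiplying by $\beta/\mu$, integrating, and letting $t\to\infty$, yields $\Phi_\infty\ge T_v(\Phi_\infty)$. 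The first step then gives $\Phi_\infty\ge\Phi^\star_\infty$. Consequently
$$
S_\infty\le e^{-\Phi_\infty}(S_0-\nu_\infty)\le e^{-\Phi^\star_\infty}(S_0-v)=S^\star_\infty ,
$$
and therefore $\mathcal N(\nu)\le\mathcal N^\star(v)$.

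\emph{Inequality $\ge$.} Let $v\in\V^\star$ with $\int v\le K$, and fix $\delta\in(0,1)$. Choose $\rho_\e\ge0$ continuous, supported in $[0,\e]$, with $\int\rho_\e=1$, and set $\nu_\e(t,x)=(1-\delta)\tilde v(x)\rho_\e(t)$, where $\tilde v$ is a continuous approximation of $v$ with $0\le\tilde v\le S_0$ and $\|\tilde v-v\|_{L^1}$ small. For $\e$ small we have $S\ge e^{-C\e}S_0-(1-\delta)\tilde v\ge0$, so $\nu_\e\in\V$. Moreover, $(S,I)$ at time $\e$ is close in $C^0$ to $(S_0-(1-\delta)\tilde v,\,I_0)$. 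Continuous dependence then gives $\mathcal N(\nu_\e)\to\mathcal N^\star((1-\delta)\tilde v)$ as $\e\to0$. This continuous dependence comes from the limit relation \eqref{lim S} and from the fact that $\Phi^\star_\infty$ is the unique positive fixed point of $T_v$, which depends continuously on the data by the $\theta$-argument above. Letting $\delta\to0$ and $\tilde v\to v$, again using continuity of $\mathcal N^\star$ in $L^1$ through the fixed-point characterisation, gives the reverse inequality for the suprema.

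The main obstacle is the continuity statement in the second part: that $S_\infty$, in other words the positive fixed point, depends continuously on the data, uniformly enough to pass to the limit in $\mathcal N$. I would derive this from the quantitative form of the concavity argument. If $T_a\le T_b+\eta$, then the fixed points satisfy $\theta\Phi_a\le\Phi_b$ with $1-\theta=O(\eta)$, using the lower bound $T(0)\ge c>0$ that holds uniformly for data near $(S_0,I_0)$.
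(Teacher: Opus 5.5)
Your argument is correct, and for the inequality $\le$ it takes a genuinely different route from the paper.

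The paper works with $U=-\ln(S_\infty/S_0)$. It combines \eqref{lim S} with $\int_0^\infty \nu/S\ge\ln\bigl(S_0/(S_0-\nu_\infty)\bigr)$ to make a truncation $U_M$ a supersolution of $U=\int k\,g(U)+R$. It then applies Proposition~\ref{prop comp}, whose hypothesis is a positive principal eigenvalue of the linearisation at $U_M$. Verifying that hypothesis costs an argument using $I\to0$, an adjoint eigenfunction, and the monotone convergence of $\lambda_M$ as $M\to+\infty$. In those variables $R=\int\frac{\beta}{\mu}I_0-\int\frac{\beta}{\mu}\nu_\infty+\ln\bigl(S_0/(S_0-\nu_\infty)\bigr)$ has no sign, so a sublinearity argument is not directly available.

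Your cumulative force of infection is the shift $\Phi=U-\ln\bigl(S_0/(S_0-v)\bigr)$. It moves the vaccination into the kernel $(S_0-v)\beta/\mu$ and leaves the uniformly positive constant $T_v(0)=\int\beta I_0/\mu$. The $\theta$-scaling argument for concave nondecreasing maps then gives three things at once:
\begin{itemize}
\item uniqueness of the final-size equation, which the paper only cites;
\item comparison for supersolutions;
\item quantitatively, Lipschitz dependence of the fixed point on the data.
\end{itemize}
This needs no spectral condition, no truncation, and causes no trouble where $S_\infty$ vanishes. Your Duhamel bound $S+V\le e^{-\Phi}(S_0-\nu_\infty)+\nu_\infty$ replaces the $\int\nu/S$ estimate.

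For $\ge$ your construction is the paper's: a pulse of width $\e$, then density. However, your quantitative stability replaces Arzel\`a--Ascoli plus the cited uniqueness. It also actually proves the $L^1$-continuity of $\mathcal{N}^\star$, which the paper only asserts.

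The paper's route yields a comparison principle that tolerates sign-changing sources and connects with the threshold theory of Theorem~\ref{th classical}. Yours is shorter and self-contained.

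In a write-up, fill in three small points:
\begin{itemize}
\item Make the gain $(1-\theta)\inf_\X T_v(0)$ in the $\theta$-argument explicit. It uses that fixed points are bounded by $\sup_x\int\frac{\beta}{\mu}(S_0+I_0)$.
\item Choose $\tilde v$ so that $(1-\delta)\int\tilde v\le K$, so that $\nu_\e$ respects the vaccine constraint.
\item Get $S\ge0$ on $[0,\e]$ by a continuation argument, since the paper's existence result presupposes $\nu\in\V$.
\end{itemize}
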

We shall actually prove something more precise : for each vaccination strategy $\nu(t,x)$, it is always better to vaccinate each age class at the start of the epidemic. More precisely, for $\nu(t,x)$ given, if we denote $\nu_\infty(x)= \int_0^\infty\nu(\tau,x)\di \tau$ the quantity of individuals aged $x$ that will be eventually vaccinated, then, we can find a sequence of vaccination plans $\nu_\e(t,x)$, supported in $[0,\e]\times \X$ with $\int_0^\infty \nu_\e(t,x)\di \tau = \nu_\infty(x)$ (that is, for each $x$, we vaccinate the same number of individuals of age $x$ in all strategies), such that $\mathcal{N}(\nu) \leq \mathcal{N}(\nu_\e) \underset{\e\to 0}{\longrightarrow} \mathcal{N}^\star(\nu_\infty)$.\\

\begin{rmrk}\label{req dur}
    As we mentioned before, one can interpret Theorem \ref{th new opt} as saying that it is always better to vaccinate the individuals as soon as possible (provided the vaccines are available from the start). Although this might seem obvious, the proof is rather technical, and relies heavily on the specific shape of the equations in the SIR model. We even believe that it is possible to find variations of the SIR model such that the claim that ``it is best to use all vaccines as soon as possible" would be false.\\

    To understand this, consider the model \eqref{syst} where we remove the age structure, so that it becomes
\begin{equation}\label{sir ode}
    \left\{
\begin{array}{rll}
     \dot S(t) &= -\beta S(t)I(t) - \nu(t), &\quad t>0, \\
     \dot I(t) &= S(t)I(t) - \mu I(t), &\quad t>0,
\end{array}
\right.
\end{equation}
where $\beta,\mu$ are positive constants. This is an ODE system and the control $\nu$ depends only on $t$.

The evolution of this system can be seen in the phase space $(S,I)$, represented below.

\begin{center}
\includegraphics[scale=0.5]{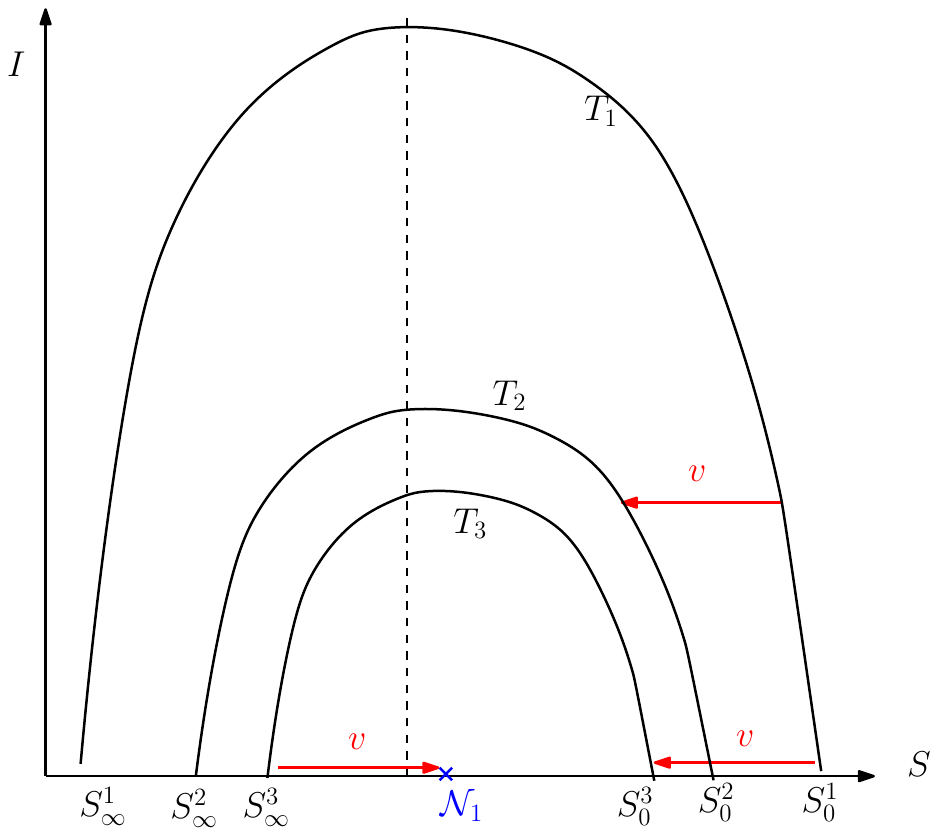}
\end{center}

We have represented three trajectories of \eqref{sir ode}, $T_1,T_2,T_3$, starting from the points $(S_0^1,I_0^1)$, $(S_0^2,I_0^2)$, $(S_0^3,I_0^3)$ respectively, with $\nu\equiv 0$ and $I_0^1, I_0^2,I_0^3 =\epsilon \ll1 $.

The effect of the vaccination term $\nu(t)$, is to translate toward the left, jumping from one trajectory to an other.

Assume that we start with a population of susceptible equal to $S_0^1$ and initial number of infectious $I_0^1$ very small. Then, if $\nu(t)\equiv 0$, the trajectory of the point $(S,I)$ would follow the trajectory $T_1$ and at the end we would have $\lim_{t\to+\infty} S(t) = S_\infty^1$ surviving individuals.

Let us compare two scenarios, one where we vaccinate very early a number $v>0$ of individuals, one where we vaccinate the same number of susceptible later.\\

We start with $S_0^1$ susceptible individuals in the initial population. Suppose that, at time $t=0$, we vaccinate $v$ individuals. Assume that $S_0^1-v=S_0^3$. Then, the dynamics would follow (after the vaccination) the trajectory $T_3$. The number of surviving individuals would be $S_\infty^3 + v$ (because $S_\infty^3$ is the number of individuals who stay susceptible after the epidemic, they were never contaminated, and $v$ is the number of individuals that we vaccinated at the start), which is represented by the point $\mathcal{N}_1$ in the image.

Consider now a different scenario, where we wait before vaccinating the susceptible. The system would follow the trajectory $T_1$, then at some strictly positive time we vaccinate a number $v$ of susceptible, so that we jump to the trajectory $T_2$ say. The number of surviving individuals in this scenario would be $S_\infty^2+v$.

We see on the image that $S_\infty^2<S_\infty^3$, so that the number of surviving individuals is larger when the vaccination occurs earlier. However, this works because, in the scenario where we vaccinate later, we jump from the trajectory $T_1$ to $T_2$, and $T_2$ is above $T_3$ (where we would jump when vaccinating early). This works well in the SIR system because the vector field of the system has a specific shape. We believe that one could imagine vector fields (and then ODE systems) for which this would not work.
\end{rmrk}


Using Theorem \ref{th new opt}, we can obtain some qualitative results on our optimization problems. We illustrate this by solving the (IVP) \eqref{ocp:equivalent} (and therefore the (OVP) \eqref{ocp}) in the specific case where the contamination function $\beta(x,y)$ is independent on its first variable (that is, $\beta(x,y)=\beta(y)$ - this means that every susceptible, no matter their age, have the same probability to get infected - however, not all infectious contribute to the same extent to the contamination), and when the quantity of vaccines available $K$ is ``not too large". We denote in the sequel $\mathds{1}_A$ the indicator function of the set $A$, that is, the function equal to $1$ on $A$ and zero everywhere else.
\begin{thrm}\label{th quali}
Let $S_0,I_0,\beta,\mu$ be fixed. Assume that $\beta$ depends only on its second variable, that is $\beta(x,y)=\beta(y)$.

    Consider the (IVP) \eqref{ocp:equivalent} with $K$ such that
    $$
    K< \min\left\{\frac{\mu}{\beta}\right\}\left( \int_\X\frac{\beta}{\mu}S_0 \right).
    $$
    We assume in addition that, for all $s>0$, the Lebesgue measure of the set $\{\frac{\beta}{\mu}=s\}$ is zero.

    Then, the function 
    $$
    v_K := S_0\mathds{1}_{\frac{\beta}{\mu}>s_K},
    $$
    where $s_K$ is such that $\int_{\frac{\beta}{\mu}>s_K}S_0 = K$,
    is a maximizer of \eqref{ocp:equivalent}.
\end{thrm}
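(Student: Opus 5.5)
The plan is to reduce (IVP) \eqref{ocp:equivalent} to a one-dimensional problem. The reduction uses the final-size relation \eqref{lim S} of Theorem \ref{th existence and convergence}, applied with $\nu\equiv0$ and initial datum $S_0-v$. Write $w:=\beta/\mu$. Because $\beta(x,y)=\beta(y)$, the exponent in \eqref{lim S} does not depend on $x$, so
$$
S^\star_\infty(x)=(S_0(x)-v(x))e^{-Z},\qquad Z=\int_\X w\,(I_0+S_0-v-S^\star_\infty).
$$
This formula holds where $S^\star_\infty>0$. Where $S_0=v$ we have $S^\star_\infty=0$, since $\partial_t S^\star\le0$ and $S^\star\ge0$, so the formula holds there trivially. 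Substituting the expression for $S^\star_\infty$ into the definition of $Z$ gives a scalar equation:
$$
Z=c+a(1-e^{-Z}),\qquad c:=\int_\X wI_0>0,\quad a=a(v):=\int_\X w(S_0-v)\ge0 .
$$
The function $g(Z)=c+a(1-e^{-Z})-Z$ is concave with $g(0)=c>0$ and $g(Z)\to-\infty$ as $Z\to\infty$. Hence it has a unique positive root $Z(a)$, and $Z(a)$ is nondecreasing in $a$ because $g$ is increasing in $a$. The payoff then takes the closed form
$$
\mathcal N^\star(v)=\int_\X S_0-\bigl(1-e^{-Z(a(v))}\bigr)\Bigl(\int_\X S_0-m(v)\Bigr),\qquad m(v):=\int_\X v .
$$

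Next I would maximize in two stages. First fix the mass $m=\int v\le K$. The second factor of the loss is then fixed, and the first factor is nondecreasing in $a(v)=\int wS_0-\int wv$. So the task is to maximize $\int_\X w v$ over $0\le v\le S_0$ with $\int v=m$. This is the bathtub principle. Since the level sets $\{w=s\}$ are null, the function $s\mapsto\int_{\{w>s\}}S_0$ is continuous and nonincreasing, so there is a threshold $s_m$ with $\int_{\{w>s_m\}}S_0=m$. I would then check by the standard exchange argument that $v_m=S_0\mathds 1_{\{w>s_m\}}$ maximizes $\int wv$: for any competitor $v$,
$$
\int w(v_m-v)=\int_{\{w>s_m\}}(w-s_m)(S_0-v)+\int_{\{w\le s_m\}}(s_m-w)v\ \ge0 .
$$

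Second, I would optimize over $m\le K$. As $m$ increases, $a(v_m)$ decreases, hence $1-e^{-Z}$ decreases, and $\int S_0-m$ also decreases. Both factors of the loss are nonnegative and nonincreasing, so the loss is nonincreasing in $m$ and $m=K$ is optimal. This gives $v_K$ as a maximizer.

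I expect the main obstacle to be rigor in the first step, where two things need care. One is justifying the final-size relation where $S^\star_\infty$ may vanish. The other is making sure $Z$ is the correct root, namely that the limit of the dynamics selects the positive root of $g$. The hypothesis on $K$ should enter here. Since
$$
\int_\X wv\le \max w\int_\X v\le \max w\cdot K<\int_\X wS_0 \quad\text{(as } K<\min(1/w)\int_\X wS_0\text{)},
$$
the hypothesis gives $a(v)>0$ for every admissible $v$. I would use this positivity to guarantee that $Z$ depends strictly on $a$, and if necessary to rule out degenerate configurations when identifying the limit. If the uniqueness of the positive root is not enough on its own, I would derive $Z$ directly by integrating the $I$-equation in time. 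This expresses $Z=\int_0^\infty\int_\X\beta I$ in terms of the total mass lost from $S^\star+I^\star$, and yields the same scalar equation without dividing by $S^\star_\infty$.
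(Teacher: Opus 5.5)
Your proof is correct. It follows the paper's skeleton: a closed form for $\mathcal N^\star$ using that the exponent is independent of $x$, the bathtub principle at fixed mass, then monotonicity in the mass. The key monotonicity lemma, however, is different.

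The paper works with $\sigma_0=a$ and $\sigma_\infty=\int_\X\frac{\beta}{\mu}S^\star_\infty=ae^{-Z}$. It derives $f(\sigma_\infty)=\eta f(\sigma_0)$ with $f(x)=xe^{-x}$, and proves in Lemma \ref{lem sigma} that $\sigma_\infty$ is nonincreasing in $\sigma_0$. That statement only holds on the branch $\sigma_0>1$: the derivative of $ae^{-Z(a)}$ in $a$ has the sign of $1-a$. This is exactly where the paper uses its bound on $K$.

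You instead track $Z=\int_\X\frac{\beta}{\mu}I_0+\sigma_0-\sigma_\infty$. This is precisely the quantity that enters the payoff, since $e^{\sigma_\infty-\sigma_0}\eta=e^{-Z}$. Its monotonicity in $a$ follows from $\partial_a g=1-e^{-Z}>0$ for every $a\ge 0$.

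Consequently, your argument never needs the hypothesis on $K$ beyond $K\le\int_\X S_0$, which is required for $s_K$ to exist. This substantiates the paper's closing remark that the restriction can be removed. Your expectation that the hypothesis is needed to select the root is mistaken. From $S^\star_\infty\le S_0-v$ you get $Z\ge c>0$, and a concave $g$ with $g(0)=c>0$ has only one positive root.

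Your final step is also more direct than the paper's comparison on the sets $A,B,C$: you write the loss as a product of two nonnegative nonincreasing functions of $m$. Finally, your fallback of integrating the equations in time gives the final-size relation at every $x$. That is more careful than the paper's Lemma \ref{req Sinfty}, which invokes Proposition \ref{th lim S} even though that result only covers points where the limit is positive.
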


\begin{rmrk}\label{req quali}
Theorem \ref{th quali} tells us that (up to some hypotheses on the parameters), the optimal strategy in the (IVP) \eqref{ocp:equivalent} is to vaccinate the individuals in the class ages for which the quantity $\frac{\beta}{\mu}$ is large, and to use all the vaccines available on them.

Therefore, owing to Theorem \ref{th new opt} (and to the remarks below), an optimal strategy in the (OVP) \eqref{ocp} would be to vaccinate  the individuals in the age classes for which the quantity $\frac{\beta}{\mu}$ is large and to do it ``as soon as possible". Observe that for the (OVP) \eqref{ocp}, this strategy does not belong to the admissible controls $\V$ we consider - but it will be possible to approximate it by a maximizing sequence. 
\end{rmrk}

\begin{rmrk}
    We mention that the technical hypothesis that the sets $\{\frac{\beta}{\mu}=s\}$ have zero Lebesgue measure for each $s>0$ can be removed, we will mention how in the proofs. It is nevertheless convenient to simplify some computations.

    We also believe that the restriction hypothesis on $K$ can be removed, although this would require more computations.
\end{rmrk}

We conclude this section with a simulation that illustrates some of our results. We consider the situation where $\X = [0,1]$, and where 
$$
\beta(x,y)=b\,e^{\frac{-(x-y)^2}{\sigma_{\beta}}}, \quad \mu(x)=m_\mu\,x+q_\mu.
$$
We take $b=0.05$ and $\sigma_\beta=0.05$ and $m_{\mu}=0.4$ and $q_\mu=0.1$ in the simulation.

The initial population $S_0$ is taken as
 $$
S_0(x)=\frac{1}{\sigma \sqrt{2\pi}}\,e^{-\frac{1}{2}\bigl(\frac{x-\overline x}{\sigma}\bigr)^2},
$$
with $\bar x=0.3$, $\sigma=0.5$, and $I_0$ is a small constant.

In Figure $1$, image (a) represents an optimal vaccination plan. As expected, it is concentrated near $t=0$. Images (b) and (c) show the evolution of the $S$ and $I$ populations (the time is on the horizontal axis, while the age is on the vertical axis). One can see that the optimal vaccination has completely removed all the susceptible located in some area, thus giving rise to the horizontal zone where the is no population during the evolution. The numerical result is obtained by solving the two optimization problems with the primal-dual interior point algorithm, and using the Euler scheme as integration method.

\begin{figure}[H]
\centering
\subcaptionbox{\label{Control_gau3_bsym_u2}}
{\includegraphics[trim={1.8cm 7cm 1.8cm 7cm},clip,width=5.2cm]{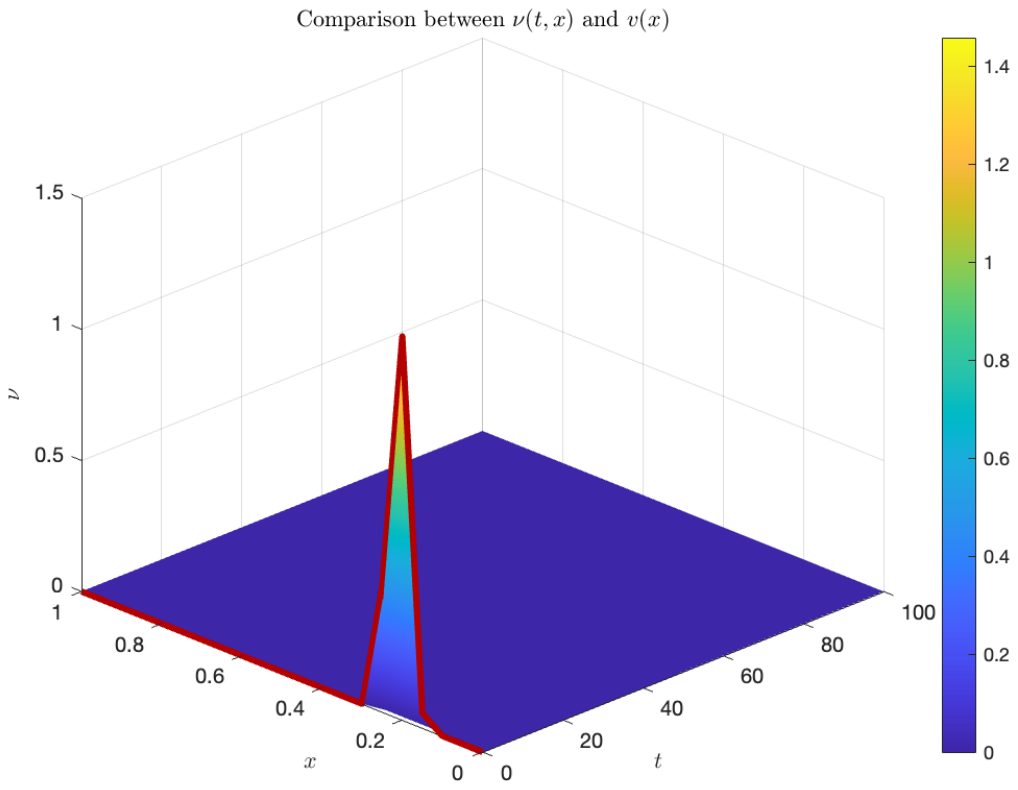}}
\subcaptionbox{\label{s_gau3_bsym_u2}}
{\includegraphics[trim={1.8cm 7cm 1.8cm 7cm},clip,width=5.2cm]{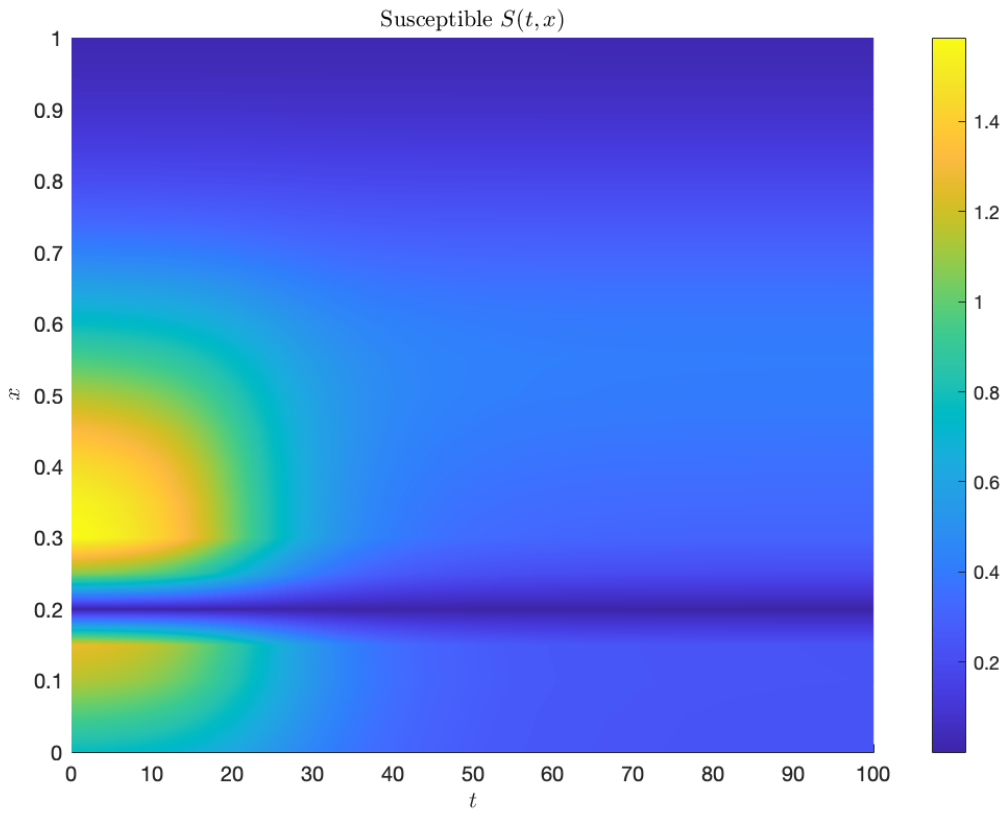}}
\subcaptionbox{\label{i_gau3_bsym_u2}}
{\includegraphics[trim={1.8cm 7cm 1.8cm 7cm},clip,width=5.2cm]{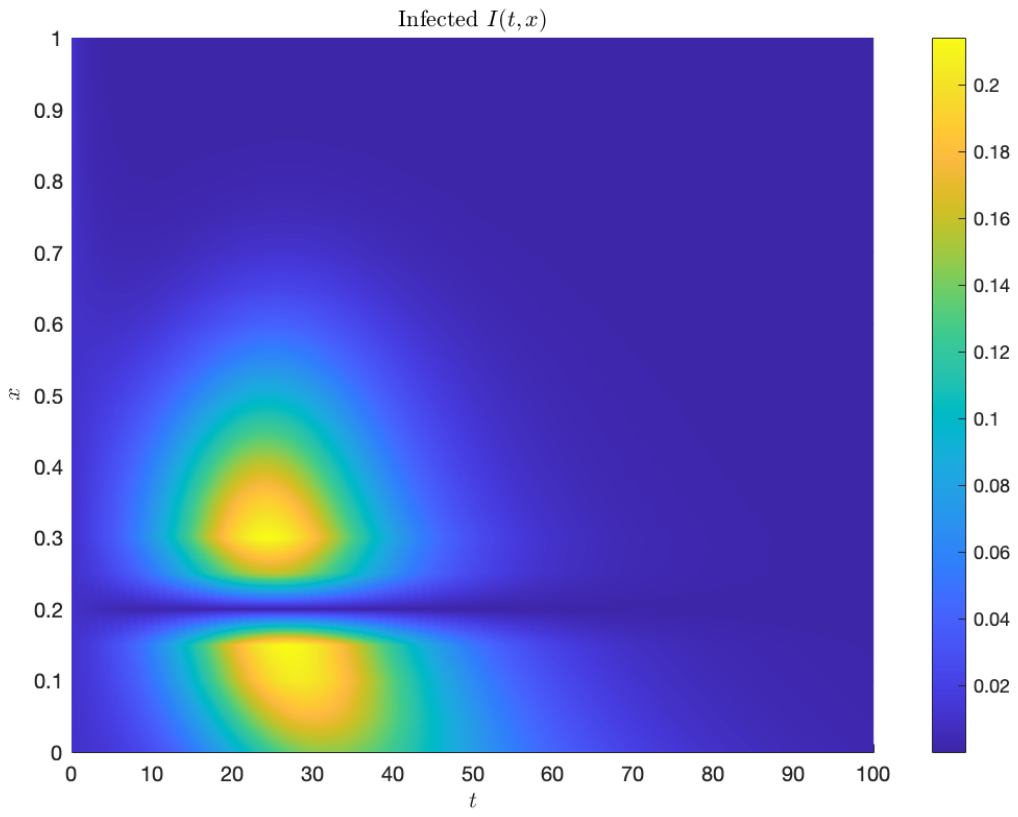}}
\caption{Optimal vaccination (a), susceptible population (b) and infected population (c).
The solid red line represents the optimal-control trajectory for the equivalent problem \eqref{ocp:equivalent}.}
\end{figure}

\section{Proofs of the analytical results}

\subsection{Existence of solutions to \eqref{syst}}

Let us now present the results concerning the existence of solutions to \eqref{syst}. The study of the long-time behavior of the solutions is considered in section \ref{sec ltb}.

\begin{prop}\label{prop existence}
    Let $\nu \in \V$ and $S_0,I_0 \in C^0(\X)$ with $S_0,I_0\geq 0$. Then, there is $(S,I) \in C^1(\R^+,C^0(\X))$, $S,I\geq 0$, solution to \eqref{syst}.
\end{prop}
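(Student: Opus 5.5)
We treat \eqref{syst} as an ODE in the Banach space $E = C^0(\X)\times C^0(\X)$ with sup norm, writing it as $\frac{d}{dt}(S,I) = F(t,S,I)$ where
$$
F(t,S,I) = \Big(-S\,\mathcal{B}I - \nu(t,\cdot),\ S\,\mathcal{B}I - \mu I\Big),\qquad \mathcal{B}I(x) \coloneqq \int_\X \beta(x,y)I(y)\di y .
$$
Since $\beta\in C^0(\X\times\X)$, the operator $\mathcal{B}$ is bounded linear on $C^0(\X)$ with norm at most $|\X|\,\|\beta\|_\infty$. The map $F$ is therefore locally Lipschitz in $(S,I)$, and it is continuous in $t$ because $\nu\in C^0_{t,x}$ gives continuity of $t\mapsto\nu(t,\cdot)$ in $C^0(\X)$ on compact time intervals. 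The Cauchy–Lipschitz theorem in Banach spaces then yields a unique maximal solution $(S,I)\in C^1([0,T^*),E)$, and $T^*<\infty$ can only happen if $\|S\|_\infty+\|I\|_\infty$ blows up.

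Next come the sign properties. Since $\nu\in\V$, the definition of $\V$ gives $S\ge0$ directly. We need to be careful here, because $\V$ is defined through the solution itself. We read it as saying that the solution, as long as it exists, stays nonnegative. Alternatively, one can note that for fixed $x$, $S(t,x)$ solves a linear scalar ODE with a source term, so nonnegativity is exactly the admissibility condition. For $I$, fix $x$ and view the second equation as a linear ODE in $I(\cdot,x)$ with source $g(t,x)=S\,\mathcal{B}I$. Duhamel's formula gives
$$
I(t,x)=I_0(x)e^{-\mu(x)t}+\int_0^t e^{-\mu(x)(t-s)}S(s,x)\,\mathcal{B}I(s)(x)\di s .
$$
To prove $I\ge0$, we use a standard positivity trick. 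One option is a Picard iteration on $[0,T]$ for the $I$-equation with $S$ frozen: starting from $I^{(0)}=I_0e^{-\mu t}\ge0$, the map preserves nonnegativity because $S\ge0$ and $\beta>0$, and it is a contraction for small $T$, so its fixed point is nonnegative. Iterating over successive small intervals covers all times. An equivalent option is to argue at the first time $\min_x I$ becomes negative, applied to $I+\delta e^{Ct}$.

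The last step is global existence, which means excluding blow-up. We have $\partial_t S\le 0$, so $0\le S\le S_0$. Adding the equations gives $\partial_t(S+I) = -\nu-\mu I\le 0$, so $0\le I\le S_0+I_0$. Hence the norm stays bounded, $T^*=+\infty$, and $(S,I)\in C^1(\R^+,C^0(\X))$.

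We expect the main obstacle to be the slightly circular definition of $\V$, where admissibility refers to $S\ge0$ along the solution. The argument is designed to cope with this: local existence uses only the regularity of $\nu$, so the solution is defined before positivity is used, and positivity of $S$ is then taken from $\nu\in\V$. If one wants to avoid this, one can instead solve with $-\nu\,\mathds{1}_{S>0}$ replaced by a Lipschitz cutoff. We would not pursue that, since admissibility already provides $S\ge0$.
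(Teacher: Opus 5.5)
Your proof is correct and follows essentially the paper's route: local existence by a contraction (your Cauchy--Lipschitz theorem in $C^0(\X)\times C^0(\X)$ plays the role of the paper's Banach fixed point on $E_T$), $S\ge 0$ taken from the admissibility of $\nu$, and the a priori bound $0\le S+I\le S_0+I_0$, obtained by summing the equations, to continue the solution for all times. Your Duhamel and positivity-preserving Picard argument for $I\ge 0$ (made before you use $\partial_t S\le 0$) supplies a step the paper only asserts. Working with the blow-up alternative on compact time intervals also means you never need a global bound $\|\nu\|_{L^\infty}<\infty$, which the paper's uniform step size implicitly assumes.
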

The proof follows very classical arguments (rewriting the system as a fixed point for a well chosen function). Nevertheless we present it as, in the course of the proof, we provide some estimates that will be useful later.
\begin{proof} [Proof of Proposition \ref{prop existence}]
Let  $N \coloneqq \|S_0 + I_0\|_{L^\infty}$ (in particular, we have $\|S_0\|_{L^\infty}\leq N$ and $\|I_0\|_{L^\infty}\leq N$, owing to the non-negativity of $S_0,I_0$). For $T>0$ to be chosen later, we define
$$
E_T \coloneqq \{ (S,I) \in C^0([0,T],C^0(\X)) \ : \ \|S\|_{L^\infty([0,T]\times \X)} \leq 2N,\|I\|_{L^\infty([0,T]\times \X)} \leq 2N  \}.
$$
We also introduce the function $F$
\begin{multline*}
F(S,I)(t,x) = \Bigg(S_0(x) - \int_0^t S(\tau,x) \int_\X\beta(x,y) I(\tau,y)\di \tau \di y - \int_0^t\nu(\tau,x)\di \tau ,\\ I_0(x) + \int_0^t S(\tau,x) \int_\X\beta(x,y) I(\tau,y)\di \tau \di y - \int_0^t  \mu(x) I(\tau,x)\di \tau\Bigg). 
\end{multline*}
From now on, our goal is to prove that the function $F$ admits a fixed-point on $E_T$ for $T$ small enough, using Banach fixed-point theorem. This will follow from classical arguments. However, the function $F$ is not globally Lipschitz, therefore, in order to prove the existence of solutions for all time, we shall establish some estimates to be able to conclude.

\medskip
{\em Step 1. Existence of a local solution.}

First, observe that
$$
\left\|S_0 - \int_0^t S (\beta I) - \int_0^t\nu \right\|_{L^\infty([0,T]\times \X)} \leq N + T\|\beta\|_{L^\infty} (2N)^2 + T \|\nu\|_{L^\infty},
$$
and 
$$
\left\|I_0 + \int_0^t S (\beta I) - \int_0^t  \mu I \right\|_{L^\infty([0,T]\times \X)} \leq N + T\|\beta\|_{L^\infty} (2N)^2 + T \|\mu\|_{L^\infty}(2N).
$$
Taking $T>0$ small enough depending only on $N,\beta,\nu,\mu$ but not $S_0,I_0$, we ensure that $F(E_T) \subset E_T$.

Now, take $(S_1,I_1), (S_2,I_2) \in E_T$. An easy computation shows that there is $C>0$, that depends only on the coefficients $ \beta,\mu$ (we use the usual norm on the product space), such that
$$
\|F(S_1,I_1) - F(S_2,I_2) \|_{L^\infty([0,T]\times \X)} \leq C N T \|(S_1,I_I) - (S_2,I_2)\|_{L^\infty([0,T]\times \X)}.
$$
Taking $T>0$ small enough, this is a contraction. Therefore, for small $T>0$ there is a solution of the system \eqref{syst} for $t\in [0,T]$.

\medskip
{\em Step 2. Extension of the solution to all $t>0$.}

Let $(S,I)$ be the solution obtained in the first step for $t\in [0,T]$. Owing to the choice of $\nu$, we necessarily have $S\geq 0$. Therefore, we also have $I \geq 0$, and then $S+I\geq 0$ (on $[0,T]\times \X)$. Summing the two equations of \eqref{syst}, we have $\partial_t (S+I) = -\nu - \mu I \leq 0$, so that $0\leq S+I\leq S_0 + I_0$. Hence,
$$
S(T,\cdot) + I(T,\cdot) \leq S_0 + I_0,
$$
that is, $(S(T,\cdot),I(T,\cdot))$ satisfies the same properties as $S_0,I_0$. Therefore, using the same arguments as in the first step, we can extend the solution for $t\in\R^+$.
\end{proof}

\subsection{Long-time behavior of the solutions of \eqref{syst}}\label{sec ltb}

We now investigate the limiting state $S_\infty(x) = \lim_{t\to+\infty}S(t,x)$ which appears in the formulation of our optimal control problem \eqref{ocp}. The existence of this limit state is direct: for an admissible strategy $ \nu \in \V$, the function $S(t,x)$, solution of \eqref{syst}, is non-negative and is time non-increasing. Therefore, the limiting state $S_\infty$ is well defined, the convergence of $S$ toward $S_\infty$ is pointwise, and $S_\infty$ is also non-negative.\\

The next proposition proves the relation \eqref{lim S} and concludes the proof of Theorem \ref{th existence and convergence}.
\begin{prop}\label{th lim S} Let $(S,I)$ be the solution of \eqref{syst} arising from the initial datum $(S_0,I_0)$ with vaccination term $\nu\in \V$ and recall that $\nu_\infty(x) := \int_0^\infty\nu(\tau,x)\di \tau$. We have 
$$
S(t,x) \underset{t\to+\infty}{\longrightarrow} S_\infty(x), \quad I(t,x) \underset{t\to+\infty}{\longrightarrow}0.
$$
These convergences hold pointwise in $x$. 
    
    For all $x$ such that $S_\infty(x)>0$, \eqref{lim S} holds true, that is
        \begin{multline*}
    S_\infty(x) = S_0(x)\exp\Bigg(\int_{\mathcal X}\frac{\beta(x,y)}{\mu(y)}\,S_\infty(y)\,\di y +\int_{\mathcal X}\frac{\beta(x,y)}{\mu(y)}\nu_\infty(y)\,\di y \\ -\int_0^\infty \frac{\nu(\tau,x)}{S(\tau,x)}\,\di \tau -\int_{\mathcal X}\frac{\beta(x,y)}{\mu(y)}\,(I_0+S_0)(y)\, \di y\Bigg).
    \end{multline*}
\end{prop}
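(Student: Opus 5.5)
The proof will follow the classical Kermack--McKendrick integration trick. The existence of $S_\infty$ was already noted: $S$ is non-negative and non-increasing in $t$. For $I$, I first integrate the sum of the two equations. Since $\partial_t(S+I) = -\nu-\mu I$, we get for every $t>0$
\begin{equation*}
S(t,x)+I(t,x)+\int_0^t\nu(\tau,x)\di\tau+\mu(x)\int_0^t I(\tau,x)\di\tau = S_0(x)+I_0(x).
\end{equation*}
Every term on the left is non-negative. Hence $\int_0^\infty I(\tau,x)\di\tau\le (S_0+I_0)(x)/\mu(x)<\infty$, and $\nu_\infty(x)$ is finite. Also $S+I$ is non-increasing, so it converges, and therefore $I(t,x)$ converges as well. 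A non-negative function with finite integral on $\R^+$ that has a limit must have limit zero, so $I(t,x)\to 0$. Letting $t\to\infty$ in the identity above then gives
\begin{equation*}
\mu(x)\int_0^\infty I(\tau,x)\di\tau = S_0(x)+I_0(x)-S_\infty(x)-\nu_\infty(x).
\end{equation*}

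Next, I fix $x$ with $S_\infty(x)>0$. Then $S(t,x)\ge S_\infty(x)>0$ for all $t$, so I can divide the $S$-equation by $S$:
\begin{equation*}
\partial_t \log S(t,x) = -\int_\X\beta(x,y)I(t,y)\di y - \frac{\nu(t,x)}{S(t,x)}.
\end{equation*}
Integrating over $[0,t]$ and letting $t\to\infty$ gives
\begin{equation*}
\log\frac{S_\infty(x)}{S_0(x)} = -\int_\X\beta(x,y)\int_0^\infty I(\tau,y)\di\tau\di y-\int_0^\infty\frac{\nu(\tau,x)}{S(\tau,x)}\di\tau.
\end{equation*}
The limits are justified by monotone convergence, because both integrands are non-negative. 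The interchange of the $\tau$- and $y$-integrals is justified by Tonelli. The term $\int_0^\infty \nu/S$ is finite, since it is bounded by $\nu_\infty(x)/S_\infty(x)$.

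Finally, I substitute the expression for $\int_0^\infty I(\tau,y)\di\tau$ from the first step, divided by $\mu(y)$, into this identity and exponentiate. This gives exactly \eqref{lim S}. Note that the identity $\int_0^\infty I(\tau,y)\di\tau=(S_0+I_0-S_\infty-\nu_\infty)(y)/\mu(y)$ is needed at every $y$, not only where $S_\infty(y)>0$. This is fine, because the first step did not use positivity of $S_\infty$.

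The main technical point is integrability in $y$. Since $S_\infty$ and $\nu_\infty$ are only pointwise limits, I must check that they are measurable and bounded on $\X$. Measurability holds because they are monotone limits of continuous functions. Boundedness holds because both are at most $S_0+I_0$, which follows from the conservation identity. So all $y$-integrals are well defined, and Tonelli applies because $\beta I\ge 0$. The remaining subtle point, the convergence $I\to0$, is settled by the monotonicity of $S+I$ as explained above.
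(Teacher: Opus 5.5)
Your proof is correct, and it follows a more direct route than the paper's. The paper first solves $\partial_t I+\mu I=-\partial_t S-\nu$ by Duhamel's formula. It substitutes the result into $-\partial_t S/S=\int_\X\beta I+\nu/S$ and obtains the finite-time identity \eqref{eq u}. That identity is an integral equation for $S(t,x)$ involving only $S$ and $\nu$, through convolutions against $e^{-\mu(y)\tau}$. The paper then lets $t\to+\infty$ using a Ces\`aro-type lemma, $\int_0^t f(t-\tau)e^{-m\tau}\di\tau\to f_\infty/m$, together with dominated convergence in $y$.

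You instead integrate the conservation law $\partial_t(S+I)=-\nu-\mu I$ in time. This gives $\mu(y)\int_0^\infty I(\tau,y)\di\tau=(S_0+I_0-S_\infty-\nu_\infty)(y)$ directly, so you need only monotone convergence and Tonelli, with no convolution and no Ces\`aro argument.

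Your route also covers a point the paper leaves implicit. The paper's proof never argues that $I(t,x)\to0$: it is only stated, and then used in the stability step of Proposition~\ref{prop:tildeU}. Your observation settles it: $S+I$ and $S$ both converge and $\int_0^\infty I<\infty$, so $I\to0$. You also check the measurability and boundedness of $S_\infty$ and $\nu_\infty$ needed for the $y$-integrals.

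What the paper's approach gives in exchange is the closed finite-time representation \eqref{eq u}. This fits its stated aim of recasting the model as a nonlinear integral equation. For the limit formula \eqref{lim S} itself, however, that extra structure is not needed.
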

Before proving Proposition \ref{th lim S}, we start with a simple observation.
\begin{lem}
     If $x\in \X$ is such that $S_\infty(x) >0$, we have
     $$
     \int_0^\infty \frac{\nu(\tau,x)}{S(\tau,x)}\,\di \tau<+\infty.
     $$
\end{lem}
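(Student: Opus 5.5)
We work at a fixed $x$ with $S_\infty(x)>0$ and use that $t\mapsto S(t,x)$ is non-increasing. Indeed, from the first equation of \eqref{syst}, $\partial_t S = -S\int_\X\beta I - \nu \leq 0$, since $S,I,\nu\geq 0$ (Proposition \ref{prop existence} and $\nu\in\V$). Consequently, for every $\tau\geq 0$,
$$
S(\tau,x)\geq S_\infty(x)>0,
$$
so the integrand $\nu(\tau,x)/S(\tau,x)$ is well defined, non-negative and continuous in $\tau$.

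The key bound is then immediate:
$$
\int_0^\infty \frac{\nu(\tau,x)}{S(\tau,x)}\di\tau \leq \frac{1}{S_\infty(x)}\int_0^\infty \nu(\tau,x)\di \tau = \frac{\nu_\infty(x)}{S_\infty(x)}.
$$
It remains to check that $\nu_\infty(x)<+\infty$. One route is through membership $\nu\in L^1_tC^0_x$, which gives $\nu_\infty(x)\leq \int_0^\infty\|\nu(\tau,\cdot)\|_{L^\infty}\di\tau<\infty$. A route that avoids any regularity assumption is to integrate $\partial_t S\leq -\nu$ on $[0,t]$, which yields $\int_0^t\nu(\tau,x)\di\tau \leq S_0(x)-S(t,x)\leq S_0(x)$. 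Letting $t\to\infty$ gives $\nu_\infty(x)\leq S_0(x)-S_\infty(x)<\infty$.

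Combining the two steps,
$$
\int_0^\infty \frac{\nu}{S}\di\tau\leq \frac{S_0(x)}{S_\infty(x)}-1,
$$
which is finite. There is no real obstacle here. The only point needing care is the monotonicity lower bound $S(\tau,x)\geq S_\infty(x)$, and this uses precisely the hypothesis $S_\infty(x)>0$. Alternatively, one can integrate $\partial_t \log S = -\int\beta I - \nu/S$, which gives $\int_0^t \nu/S \leq \log S_0(x) - \log S(t,x)\leq \log\frac{S_0(x)}{S_\infty(x)}$ and hence the same conclusion; this form is also the one that will naturally feed into the proof of \eqref{lim S}.
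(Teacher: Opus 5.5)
Your proof is correct and is essentially the paper's argument: the time-monotonicity of $S$ gives $S(\tau,x)\geq S_\infty(x)>0$, hence $\int_0^\infty \frac{\nu(\tau,x)}{S(\tau,x)}\,d\tau \leq \frac{\nu_\infty(x)}{S_\infty(x)}$. Your extra check that $\nu_\infty(x)\leq S_0(x)-S_\infty(x)<\infty$, and your logarithmic variant, make explicit what the paper leaves implicit by relying on $\nu\in L^1_tC^0_x$.
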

\begin{proof}
This comes directly from the observation that, if $S_\infty(x)>0$, using that $S$ is time non-increasing, we have $\int_0^\infty \frac{\nu(\tau,x)}{S(\tau,x)}\di \tau \leq \frac{1}{S_\infty(x)}\int_0^\infty \nu(\tau,x)\di \tau$. This lemma tells us that the formula \eqref{lim S} indeed makes sense where $S_\infty>0$.
\end{proof}
The proof of Proposition \ref{th lim S} relies on the following
\begin{lem} Let $(S,I)$ be solution of \eqref{syst}. For all $(t,x) \in \R^+\times \X$ such that $S(t,x)>0$, we have
\begin{align}\label{eq u}
S(t,x)=S_0(x)\exp\Biggl(&\int_{\mathcal X}\int_0^t \beta(x,y)(S(t-\tau,y)-S(0,y))\,e^{-\mu(y)\tau}\di \tau \,\di y \nonumber \\
&+\int_{\mathcal X}\beta(x,y)\int_0^t \Biggl( \int_{0}^{t-\tau} \nu(s,y)\,\di s\Biggr)e^{-\mu(y)\tau} \di \tau \,\di y\nonumber \\ 
&-\int_{\mathcal X}\frac{\beta(x,y)}{\mu(y)}\,I_0(y)(1-e^{-\mu(y)t})\di y-\int_0^t \frac{\nu(\tau,x)}{S(\tau,x)}\,\di \tau \Biggr).
\end{align}

\end{lem}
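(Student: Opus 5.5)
We divide the first equation of \eqref{syst} by $S$ and integrate in time, which gives a closed formula for $\log S$. We then express the time-integrated force of infection through $S$ and $\nu$ by solving the $I$-equation with the variation of constants formula. Fix $x$ with $S(t,x)>0$. Since $S(\cdot,x)$ is non-increasing, $S(\tau,x)\ge S(t,x)>0$ for all $\tau\in[0,t]$. We may therefore divide the first equation by $S$ on $[0,t]$ and integrate:
\begin{equation*}
\log\frac{S(t,x)}{S_0(x)} = -\int_\X \beta(x,y)\int_0^t I(\tau,y)\,\di\tau\,\di y - \int_0^t\frac{\nu(\tau,x)}{S(\tau,x)}\,\di\tau .
\end{equation*}
Here Fubini is legitimate because everything is continuous and bounded on $[0,t]\times\X$. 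It remains to compute $J(t,y):=\int_0^t I(\tau,y)\,\di\tau$ for each $y$.

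Write $F(\tau,y):=S(\tau,y)\int_\X\beta(y,z)I(\tau,z)\,\di z$ for the infection flux. The first equation gives $F(\tau,y)=-\partial_\tau S(\tau,y)-\nu(\tau,y)$. Duhamel's formula for the second equation then reads
\begin{equation*}
I(s,y)=I_0(y)e^{-\mu(y)s}+\int_0^s e^{-\mu(y)(s-r)}\bigl(-\partial_r S(r,y)-\nu(r,y)\bigr)\,\di r .
\end{equation*}
Note that this step needs no positivity of $S(\cdot,y)$, so it is valid for every $y\in\X$.

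Integrating in $s\in[0,t]$, the first term contributes $\frac{I_0(y)}{\mu(y)}(1-e^{-\mu(y)t})$. For the second term we exchange the order of integration. It is cleaner to first integrate by parts in $r$ the term $-\partial_r S\,e^{-\mu(y)(s-r)}$, using the primitive $-(S(r,y)-S_0(y))$. Doing the same for $\nu$ with the primitive $\int_0^r\nu$ turns the double integral into
\begin{equation*}
\int_0^t\Bigl[(S_0-S)(t-\tau,y) - \int_0^{t-\tau}\nu(s,y)\,\di s\Bigr]e^{-\mu(y)\tau}\,\di\tau .
\end{equation*}
An equivalent route is to set $G(s,y)=\int_0^s I$ and note that $\partial_s G+\mu G = I_0-(S-S_0)-\int_0^s\nu$, obtained by integrating the $I$-equation and using the $S$-equation. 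Solving this linear ODE directly gives the convolution form with kernel $e^{-\mu(y)\tau}$ after the substitution $\tau\mapsto t-\tau$. I would use this second route: it avoids the integration by parts and makes the signs transparent.

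Substituting $J(t,y)$ into the formula for $\log(S(t,x)/S_0(x))$ produces exactly the three $\beta$-terms of \eqref{eq u}, with the signs as stated, and exponentiating gives the result. The only delicate points are bookkeeping ones: tracking signs through the convolution, and checking that the $I_0$ term comes out as $\frac{I_0}{\mu}(1-e^{-\mu t})$. The latter holds because the $I_0$ forcing in the $G$-equation is constant, so the convolution integrates to $\frac{I_0(y)}{\mu(y)}(1-e^{-\mu(y)t})$. Regularity is not an issue, since Proposition \ref{prop existence} gives $S,I\in C^1(\R^+,C^0(\X))$ and $\nu$ is continuous.
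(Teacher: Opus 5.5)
Your proposal is correct and follows essentially the paper's route: divide the $S$-equation by $S$, solve the summed equation $\partial_t I+\mu I=-\partial_t S-\nu$ by Duhamel to express the force of infection through $S$ and $\nu$, and integrate in time. The only difference is the order of steps. The paper inserts the Duhamel formula into $\int_\X\beta I$ and then integrates in $t$, whereas your preferred variant solves the time-integrated linear ODE for $G=\int_0^s I$ directly, which is the same computation with the Fubini exchange done beforehand. Your remark that the Duhamel step needs no positivity of $S(\cdot,y)$, and therefore holds for every $y\in\X$, is a useful point that the paper leaves implicit.
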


\begin{proof}
    Let $(t,x) \in \R^+ \times \X$ be such that $S(t,x)>0$. Because $S$ is continuous and non increasing with respect to $t$, then $S(\tau,x)>0$ for all $\tau \in  [0,t)$.

First, using equation \eqref{syst}$_1$ we find
\beq \label{eq:dtu_v}
-\frac{\pa_t S(t,x)}{S(t,x)}=\int_{\mathcal X} \beta(x,y)\,I(t,y)\,\di y +\frac{\nu(t,x)}{S(t,x)}.
\eeq
    Next, summing the equations \eqref{syst}$_{1,2}$, we get
$$
\pa_t I+\mu\,I=-\pa_t S-\nu,
$$
which yields, integrating with respect to the $t$ variable,
$$
I(t,x)=-\int_0^t \pa_t S(t-\tau,x)\,e^{-\mu(x)\tau}\di \tau -\int_0^t \nu(t-\tau,x)\,e^{-\mu(x)\tau}\di \tau +I_0(x)\,e^{-\mu(x)t}.
$$
Now, using this expression of $I$ we compute $\int_\X \beta(x,y)I(t,y)\di y$:
\begin{align}
\int_{\mathcal X} \beta(x,y)\,I(t,y)\,\di y=&-\int_{\mathcal X}\int_0^t \beta(x,y)\,\pa_t S(t-\tau,y)\,e^{-\mu(y)\tau}\di \tau \,\di y +\nonumber\\
&-\int_{\mathcal X}\int_0^t \beta(x,y)\,\nu(t-\tau,y)\,e^{-\mu(y)\tau}\di \tau \,\di y+\nonumber\\
&+\int_{\mathcal X} \beta(x,y)\,I_0(y)\,e^{-\mu(y)t}\di y.\nonumber
\end{align}
Inserting this expression of $\int_\X \beta(x,y)I(t,y)$ in \eqref{eq:dtu_v} and integrating it with respect to $t$, we get \eqref{eq u}.
\end{proof}
We can now turn to the proof of Proposition \ref{th lim S}.

\begin{proof}[Proof of Proposition \ref{th lim S}]

Let $x$ be such that $S_\infty(x)>0$. First, owing to the monotonous convergence $S(t,x)$ toward $S_\infty(x)>0$ and using that $\inf_{y\in \X}\mu(y)>0$, we get that
    $$
     \int_0^t \frac{\nu(\tau,x)}{S(\tau,x)}\,\di \tau +\int_{\mathcal X}\beta(x,y)\,I_0(y)\,\frac{1}{\mu(y)}(1-e^{-\mu(y)t})\,\di y
\underset{t\to+\infty}{\longrightarrow}
     \int_0^\infty \frac{\nu(\tau,x)}{S(\tau,x)}\,\di \tau +\int_{\mathcal X}\frac{\beta(x,y)}{\mu(y)}\,I_0(y)\,\di y,
    $$
and the expression on the right-hand side is finite.\\

Next, to conclude the proof, it remains to show that
\begin{multline}\label{lim 2}
\int_{\mathcal X}\int_0^t \beta(x,y)(S(t-\tau,y) - S_0(y))\,e^{-\mu(y)\tau}\di \tau \,\di y +\int_{\mathcal X}\beta(x,y)\int_0^t \Biggl(\int_0^{t-\tau}\nu(s,y)\di s\Biggr)\,e^{-\mu(y)\tau}\di \tau \, \di y \\
\underset{t\to + \infty}{\longrightarrow}
\int_{\mathcal X}\frac{\beta(x,y)}{\mu(y)}\,(S_\infty(y)-S_0(y))\,\di y +\int_{\mathcal X}\frac{\beta(x,y)}{\mu(y)}\nu_\infty(y)\,\di y.
\end{multline}
This comes from the following Cesaro-like result: for any $m>0$, for any function $f \in C^0(\R^+)$ such that $f(t)\underset{t\to+\infty}{\longrightarrow} f_\infty \in \R$, we have $\int_0^t f(t-\tau)e^{-m\tau}\di \tau \underset{t \to +\infty}{\longrightarrow} \frac{f_\infty}{m}$. This implies that, for all $y\in\X$, we have the pointwise convergence
\begin{multline*}
\int_0^t \beta(x,y)(S(t-\tau,y) - S_0(y))\,e^{-\mu(y)\tau}\di \tau  +\beta(x,y)\int_0^t \Biggl(\int_0^{t-\tau}\nu(s,y)\di s\Biggr)\,e^{-\mu(y)\tau}\di \tau \\ \underset{t\to+\infty}{\longrightarrow} \frac{\beta(x,y)}{\mu(y)}\,(S_\infty(y)-S_0(y)) +\frac{\beta(x,y)}{\mu(y)}\nu_\infty(y).
\end{multline*}
Now, using the dominated convergence theorem, we can integrate with respect to $y\in\X$ to get the convergence in \eqref{lim 2}, which concludes the proof.
\end{proof}

\subsection{The optimization problem}

We now turn to the proof of Theorem \ref{th new opt}, that is, we show the equivalence between the two optimization problems \eqref{ocp} and \eqref{ocp:equivalent}.

In the present section, the initial data $S_0,I_0$ are fixed. We also consider an admissible strategy $\nu \in \V$ fixed, and we recall the definition \eqref{def:vinfty}, i.e.
$$
\nu_\infty(x) \coloneqq \int_0^\infty \nu(\tau,x)\di \tau.
$$
We recall that $(S,I)$ denotes the solutions of the system \eqref{syst} with vaccination strategy $\nu$. We also denote $(S^\star,I^\star)$ the solution of the system \eqref{syst 0} where there is no vaccination $(\nu \equiv 0)$ but with initial datum $(S_0-\nu_\infty, I_0)$.\\

We shall first prove that
$$
\mathcal{N}(\nu) \leq \mathcal{N}^\star(\nu_\infty).
$$
This is the object of section \ref{upper}.

Then, in section \ref{sequence}, we show that for any $v \in \V^\star$, there is a sequence of strategies $\nu_\e \in \V$ such that 
$$
\mathcal{N}(\nu_\e) \underset{\e\to 0}{\longrightarrow} \mathcal{N}^\star(v).
$$
We start with two easy lemmas.

\begin{lem}\label{lem v}
    Let $\nu \in \V$ be an admissible strategy for \eqref{syst} and let $\nu_\infty(x):= \int_0^\infty \nu(\tau,x)\di \tau$ $\in \V^\star$.
    
    Then, we have $0\leq \nu_\infty <S_0$, in particular, $\nu_\infty \in \V^\star$ is an admissible strategy for \eqref{syst 0}. 
\end{lem}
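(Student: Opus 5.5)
The plan is to read the bound directly off the first equation of \eqref{syst}, using the admissibility condition $S\geq 0$ built into $\V$ and the positivity of infections coming from $I_0\not\equiv 0$ and $\beta>0$.

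\textbf{Non-strict inequality.} Non-negativity $\nu_\infty\geq 0$ is immediate since $\nu\geq 0$. For the upper bound, fix $x\in\X$ and integrate the first equation of \eqref{syst} on $[0,t]$:
\[
S(t,x) = S_0(x) - \int_0^t S(\tau,x)\int_\X\beta(x,y)I(\tau,y)\di y\di\tau - \int_0^t \nu(\tau,x)\di\tau .
\]
Because $\nu\in\V$, we have $S\geq 0$, and Proposition \ref{prop existence} gives $I\geq 0$. Hence the middle integral is non-negative, and we get $\int_0^t\nu(\tau,x)\di\tau\leq S_0(x)-S(t,x)\leq S_0(x)$. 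Letting $t\to\infty$ gives $\nu_\infty(x)\leq S_0(x)$. Integrability of $\nu_\infty$ follows from $\nu\in L^1_tC^0_x$, so $\nu_\infty\in L^1(\X)$ and therefore $\nu_\infty\in\V^\star$.

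\textbf{Strict inequality.} Letting $t\to\infty$ in the same identity, and using $S\to S_\infty\geq 0$, we obtain
\[
S_0(x)-\nu_\infty(x) = S_\infty(x) + \int_0^\infty S(\tau,x)\int_\X\beta(x,y)I(\tau,y)\di y\di\tau .
\]
It therefore suffices to show that the right-hand side is positive. There are two cases.

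\emph{Case $S_\infty(x)>0$.} Then we are done directly.

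\emph{Case $S_\infty(x)=0$.} Here we need the infection integral to be positive. Since $S_0(x)>0$ and $S(\cdot,x)$ is continuous, $S(\tau,x)>0$ for $\tau\in[0,\delta]$ for some $\delta>0$. For $I$, use the Duhamel bound $I(\tau,y)\geq I_0(y)e^{-\mu(y)\tau}$, which follows from $\partial_t I\geq-\mu I$. Combined with $I_0\geq0$, $I_0\not\equiv0$, continuity of $I_0$, and $\beta>0$, this gives $\int_\X\beta(x,y)I(\tau,y)\di y>0$ for all $\tau$. So the integrand is positive on $[0,\delta]$, and the integral is strictly positive. Hence $\nu_\infty(x)<S_0(x)$ in both cases.

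The only mildly delicate point is this strict positivity. It relies on assumption \eqref{(A1)} ($I_0\not\equiv0$) together with $\beta>0$; without them only the non-strict inequality would hold. Everything else is routine monotone passage to the limit.
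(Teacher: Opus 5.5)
Your proof is correct and follows the same route as the paper. You integrate the first equation of \eqref{syst} in time, use $S\geq 0$ and $I\geq 0$, and conclude from the strict positivity of $\int_0^\infty S(\tau,x)\int_\X\beta(x,y)I(\tau,y)\di y\di\tau$. You also justify that positivity (via $S(\tau,x)>0$ for small $\tau$ and $I(\tau,y)\geq I_0(y)e^{-\mu(y)\tau}$), which the paper only asserts; your case split on $S_\infty(x)$ is unnecessary, since this argument gives a positive infection term for every $x$.
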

\begin{proof}
This is readily seen by integrating the first equation in \eqref{syst} with respect to the $t$ variable: this gives, for all $x\in\X$,
    $$
    0\leq S_\infty(x) = S_0(x) -\int_0^\infty S(\tau,x)\int_\X\beta(x,y)I(\tau,y) \di y \di \tau - \nu_\infty(x).
    $$
    Since $\int_0^\infty S(\tau,x)\int_\X\beta(x,y)I(\tau,y) \di y \di \tau > 0$, this yields the result.
    \end{proof}

\begin{lem}\label{req Sinfty} Let $v\in \V^\star$ be chosen and consider $(S^\star,I^\star)$ solution of \eqref{syst 0}. Then, $S^\star(t,x)$ converges pointwise when $t$ goes to $+\infty$ to a limit $S^\star_\infty $ which satisfies
    \begin{equation}\label{S infty opt}
     S^\star_\infty(x) = (S_0(x) - v(x))\exp\left(\int_{\mathcal X}\frac{\beta(x,y)}{\mu(y)}\,S^\star_\infty(y)\,\di y -\int_{\mathcal X}\frac{\beta(x,y)}{\mu(y)}\,(I_0(y) + S_0(y) - v(y))\, \di y\right).
    \end{equation}
In addition, if $v$ can be written as $v(x) = \int_0^\infty \nu(t,x)\di t$ for some $\nu \in \V$, then we have that $S_\infty^\star \in C^0(\X)$ and $\inf_\X S_\infty^\star >0$.
\end{lem}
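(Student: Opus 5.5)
The first part of the statement, the convergence of $S^\star(t,x)$ and formula \eqref{S infty opt}, follows from Proposition \ref{th lim S} once a positivity issue is handled. System \eqref{syst 0} is \eqref{syst} with $\nu\equiv 0$ and initial datum $(S_0-v,I_0)$. Since $v\in\V^\star$, the datum $S_0-v$ is non-negative but only $L^1$ (or $L^\infty$), so I would check that Proposition \ref{prop existence} and the proof of Proposition \ref{th lim S} go through for bounded measurable data. The fixed-point argument works verbatim in $L^\infty$.

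With $\nu\equiv0$, $S^\star$ is non-increasing in $t$, so the pointwise limit $S^\star_\infty$ exists. Where $S_0(x)-v(x)=0$, we have $S^\star\equiv 0$ and \eqref{S infty opt} holds trivially. Where $S_0(x)-v(x)>0$, the explicit formula
$$S^\star(t,x)=(S_0-v)(x)\exp\Bigl(-\int_0^t\int_\X\beta I^\star\Bigr)$$
shows that $S^\star(t,x)>0$ for all $t$. Hence $S^\star_\infty(x)>0$, since $\int_0^\infty\int_\X \beta I^\star<\infty$ because $\int_0^\infty \mu I^\star\le S_0+I_0$. Formula \eqref{eq u} with $\nu=0$ then passes to the limit exactly as in the proof of Proposition \ref{th lim S}, giving \eqref{S infty opt} with $S_0$ replaced by $S_0-v$.

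For the second part, assume $v=\nu_\infty$ with $\nu\in\V$. Lemma \ref{lem v} gives $v<S_0$ pointwise. I also need continuity of $v$: this follows because $\nu\in L^1_tC^0_x$, so $x\mapsto\int_0^\infty\nu(t,x)\,\di t$ is continuous by dominated convergence in the Bochner sense. Then $S_0-v$ is continuous and strictly positive on the compact set $\X$, so $\inf_\X(S_0-v)>0$. Now write \eqref{S infty opt} as
$$S^\star_\infty=(S_0-v)e^{\Phi},\qquad \Phi(x)=\int_\X \frac{\beta(x,y)}{\mu(y)}\bigl(S^\star_\infty-I_0-S_0+v\bigr)(y)\,\di y.$$
Since $0\le S^\star_\infty\le S_0+I_0$ is bounded and $\beta/\mu$ is uniformly continuous on $\X\times\X$, $\Phi$ is continuous and bounded by dominated convergence. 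Hence $S^\star_\infty$ is continuous, and $\inf S^\star_\infty\ge \inf(S_0-v)\,e^{-\|\Phi\|_\infty}>0$.

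The main obstacle is the strict inequality $v<S_0$ together with continuity of $v$. Without them $S_0-v$ could vanish and the infimum would be zero. The strictness comes from Lemma \ref{lem v}, which uses that the infection term is positive: $I>0$ for $t>0$ thanks to $I_0\not\equiv0$ and $\beta>0$. I would double-check that positivity of $\int_0^\infty S\int\beta I$ at points where $S$ could vanish. If $S(\cdot,x)$ hits zero in finite time, the inequality $\nu_\infty(x)<S_0(x)$ still holds, because $S$ decreased partly through infection before reaching zero: at $t=0^+$ we have $S>0$ and $\int\beta I>0$, so some mass is lost to infection on a short initial interval.
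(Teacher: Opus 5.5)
Your proof is correct and takes the paper's route: you apply Proposition \ref{th lim S} to \eqref{syst} with $\nu\equiv 0$ and datum $S_0-v$, get continuity of $v$ from $\nu\in L^1_tC^0_x$, and get a positive infimum from Lemma \ref{lem v} together with continuity and compactness. Your additional checks fill in details the paper leaves implicit: the formula also holds where $S_0-v$ vanishes, $S^\star_\infty>0$ wherever $S_0>v$, existence for merely bounded data needs a word, and $\int_0^\infty S\int\beta I>0$ follows from the behaviour at $t=0^+$.
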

\begin{proof}The expression of $S_\infty^\star$ comes directly by observing that \eqref{syst 0} is exactly \eqref{syst} but with initial datum $S_0-v$ instead of $S_0$ and with $\nu\equiv 0$, so that Lemma \ref{req Sinfty} is a consequence of Proposition \ref{th lim S}.
When we have $v(x) = \int_0^\infty\nu(t,x)\di \tau$ for some $\nu \in \V$, then we have $v \in C^0(\X)$ (because $\nu \in L^1_tC^0_x$). Hence $S_0 - v$ is continuous, and because $\beta,I_0,\mu$ are also continuous, then the continuity of $S_\infty^\star$ follows.

The fact that the infimum is strictly positive comes from Lemma \ref{lem v}, which states that $v<S_0$ (combined with the continuity of $v,S_0$).
\end{proof}

\begin{rmrk}
    We do not claim that $S_\infty^\star$ is strictly positive for all choice of $v\in \V^\star$: clearly, if for some $x_0\in \X$ we have $v(x_0)=S_0(x_0)$, then $S^\star(t,x_0)=0$ for all subsequent times $t\geq 0$ and $S_\infty^\star(x_0)=0$. This holds true for controls $v\in \V$ of the form $\int_0^\infty\nu(t,x)\di t$, where $\nu\in \V$.

    Similarly, for $v\in\V^\star$, $S_\infty^\star$ need not be continuous in general.
\end{rmrk}

\subsubsection{Upper bound on $\mathcal{N}$}\label{upper}

This section is dedicated to proving the following
\begin{prop}[Upper bound on the optimal control problem]\label{prop:tildeU}
Let $S_0,I_0,\nu$ be given, and let $\nu_\infty\coloneqq\int_0^\infty \nu(\tau,\cdot)\di \tau$.

Let $(S,I)$ be the solution of \eqref{syst} arising from the initial datum $(S_0,I_0)$ with vaccination strategy $\nu$, and let $(S^\star,I^\star)$ be the solutions of \eqref{syst 0} arising from initial datum $(S_0-\nu_\infty,I_0)$.

We denote the corresponding limit states $S_\infty = \lim_{t\to+\infty}S(t,\cdot)$ and $S^\star_\infty = \lim_{t\to+\infty}S^\star(t,\cdot)$.

Then, we have
$$
S_\infty \leq S_\infty^\star.
$$
\end{prop}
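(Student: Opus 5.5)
The plan is to compare $S_\infty$ with the whole trajectory $S^\star(t,\cdot)$, not directly with $S^\star_\infty$. I would show that $S_\infty$, viewed as a function constant in time, is a subsolution of the integral equation satisfied by $S^\star$. Monotone iteration then gives $S^\star(t,\cdot)\geq S_\infty$ for every $t$, and letting $t\to+\infty$ yields the claim. Write $k(x,y)=\beta(x,y)/\mu(y)$ and $v=\nu_\infty$. By Lemma \ref{lem v}, $v<S_0$.

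\emph{Step 1: an inequality for $S_\infty$.} The key elementary estimate is
$$
S_0(x)\exp\Big(-\int_0^t \frac{\nu(\tau,x)}{S(\tau,x)}\di\tau\Big)\leq S_0(x)-\int_0^t\nu(\tau,x)\di\tau .
$$
To prove it, set $a(t)=S_0-\int_0^t\nu$ and $b(t)$ equal to the left-hand side. Integrating the first equation of \eqref{syst} gives $S\leq a$, and $\nu\geq0$. Hence $b'/b=-\nu/S\leq-\nu/a=a'/a$, so $b/a$ is nonincreasing, with $b(0)/a(0)=1$. Plugging this estimate into \eqref{lim S} gives, wherever $S_\infty(x)>0$,
$$
S_\infty(x)\leq (S_0-v)(x)\exp\Big(\int_\X k(x,y)\big(S_\infty-(S_0-v)-I_0\big)(y)\di y\Big).
$$
Where $S_\infty(x)=0$ this is trivial. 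Note also that $S_\infty\leq S_0-v$ (Lemma \ref{lem v}), so the exponent is nonpositive.

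\emph{Step 2: $S_\infty$ is a subsolution for $S^\star$.} Applying \eqref{eq u} to \eqref{syst 0} (with $\nu\equiv0$ and initial datum $S_0-v$), $S^\star$ is a fixed point of
$$
\Phi(w)(t,x)=(S_0-v)(x)\exp\Big(\int_\X\beta(x,y)\int_0^t\big(w(t-\tau,y)-(S_0-v)(y)\big)e^{-\mu(y)\tau}\di\tau\di y-\int_\X k(x,y)I_0(y)(1-e^{-\mu(y)t})\di y\Big).
$$
This holds at least where $S^\star>0$; the case $S^\star=0$ can be handled by writing the equation in the form $\partial_t S^\star=-S^\star\int\beta I^\star$, or by a truncation. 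The map $\Phi$ is monotone nondecreasing in $w$. For the constant-in-time function $w\equiv S_\infty$, the exponent of $\Phi(S_\infty)$ equals $\int_\X k\,(S_\infty-S_0+v-I_0)(1-e^{-\mu t})$. The integrand is nonpositive and $1-e^{-\mu t}\le1$, so this exponent is at least $\int_\X k\,(S_\infty-S_0+v-I_0)$. Step 1 then gives $\Phi(S_\infty)\geq S_\infty$.

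\emph{Step 3: monotone iteration.} Set $w_0=S_\infty$ and $w_{n+1}=\Phi(w_n)$. By monotonicity of $\Phi$ the sequence is nondecreasing. It is bounded by $S_0-v$ because the exponent stays nonpositive as long as $w_n\leq S_0-v$. On each finite interval $[0,T]$, $\Phi$ is a Volterra-type operator that is Lipschitz on bounded sets, with the bound $|\Phi(w_1)-\Phi(w_2)|(t)\leq C\int_0^t\|w_1-w_2\|(s)\di s$. Hence Picard iterates converge uniformly to its unique bounded fixed point, which must be $S^\star$ by uniqueness in Proposition \ref{prop existence}. Therefore $S^\star(t,x)\geq S_\infty(x)$ for all $t,x$, and letting $t\to\infty$ gives $S^\star_\infty\geq S_\infty$.

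I expect the main obstacle to be Step 3: identifying the limit of the iterates with $S^\star$. This requires the uniqueness and Lipschitz estimate for the integral formulation, which must cover points where $S^\star$ vanishes (allowed when $v$ touches $S_0$, although here $v<S_0$ helps). If this is delicate, I would instead argue by a first-touching-time comparison on $S^\star(t,\cdot)-S_\infty+\e$ and let $\e\to0$.
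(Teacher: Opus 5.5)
Your argument is correct. It shares its first step with the paper and then takes a genuinely different route.

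\textbf{Shared first step.} Your Step 1 is the paper's Step 1: the bound $\int_0^\infty \nu/S \geq \ln\big(S_0/(S_0-\nu_\infty)\big)$, inserted into \eqref{lim S}.

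\textbf{The paper's route.} The paper then stays with the static final-size equation. It sets $U^\star=-\ln(S^\star_\infty/S_0)$ and a truncation $U_M=\min\{-\ln(S_\infty/S_0),M\}$. It shows these are a solution and a supersolution of $U=\int k\,g(U)+R$ with $g(u)=1-e^{-u}$. It concludes with the comparison principle of Proposition \ref{prop comp}. That principle needs a stability hypothesis: the principal eigenvalue of the linearization at $S_\infty$ must be positive. Establishing this takes Krein--Rutman, a duality argument using $I(t,\cdot)\to 0$, and a monotone limit $M\to\infty$ on the eigenvalues $\lambda_M$.

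\textbf{Your route.} You return to the time-dependent Volterra formulation of $S^\star$, which is order-preserving. In it, the stationary function $S_\infty$ is a subsolution simply because the weight $1-e^{-\mu t}\le 1$ multiplies the nonpositive integrand $\frac{\beta}{\mu}(S_\infty-S_0+v-I_0)$. Monotone iteration then converges to the unique solution of the evolution problem, namely $S^\star$. The dynamics thus selects the correct fixed point. You need no spectral theory, no truncation, and no uniqueness for the static equation \eqref{S infty opt}. You also obtain a stronger, trajectory-wise statement: $S^\star(t,\cdot)\geq S_\infty$ for every $t\geq 0$.

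\textbf{What each approach buys.} The paper's detour produces a reusable static comparison principle. The paper points out that this principle also yields uniqueness for \eqref{S infty opt}, which is needed in Proposition \ref{prop:v_eps}. It also gives, as a by-product, the linear stability of $S_\infty$ (positivity of $\lambda$ for $\phi\mapsto\phi-\int_\X\frac{\beta}{\mu}S_\infty\phi$).

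\textbf{Two small corrections to your Step 3.} First, identifying the limit of the iterates with $S^\star$ needs only uniqueness of bounded fixed points of $\Phi$ on each $[0,T]$. This follows from your own Volterra estimate plus Gronwall, since all iterates take values in $[0,S_0-v]$, where the exponent is nonpositive and $\exp$ is $1$-Lipschitz. It does not follow from Proposition \ref{prop existence}, which only asserts existence. Second, $S^\star=\Phi(S^\star)$ holds at every point. Indeed $S^\star(t,x)=(S_0-v)(x)\exp\big(-\int_0^t\int_\X\beta(x,y)I^\star(s,y)\,dy\,ds\big)$, and $S_0-v>0$ by Lemma \ref{lem v}. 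So neither the vanishing case nor your first-touching-time fallback is needed.
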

It follows in particular from this proposition that, for $\nu\in \V$ and $\nu_\infty\coloneqq\int_0^\infty\nu(\tau,\cdot) \di \tau$, we have
$$
\mathcal{N}(\nu) \leq \mathcal{N}^\star(\nu_\infty).
$$
Therefore, the supremum in \eqref{ocp} is smaller than the supremum in \eqref{ocp:equivalent}.

A key tool in the proof of Proposition \ref{prop:tildeU} will be the following comparison principle for integral equations.

\begin{prop}\label{prop comp}
Let $k \in C^0(\X\times \X)$ be such that $k>0$, let $g \in C^1(\R)$ be concave, increasing, such that $g(0)=0$ and $g\leq 1$. \\

Let $U_1,U_2$ be two non-negative, bounded from above functions, with $U_1$ lower semi-continuous and $U_2$ upper semi-continuous, that satisfy
\begin{itemize}
\item (Super/sub solutions) For all $x\in \X$, we have
$$
U_2(x) - \int_y k(x,y)g(U_2(y))\di y \leq U_1(x) - \int_y k(x,y)g(U_1(y))\di y. 
$$
    \item (Stability) The principal eigenvalue of the operator $\phi \mapsto \phi - \int_{y\in \X}k(\cdot,y)g^\prime(U_1(y))\phi(y)\di y$ is strictly positive, that is, there are $\lambda_1, \phi_1>0$ such that 
    $$
    \phi_1(x) - \int_y k(x,y) g^\prime(U_1(y))\phi_1(y)\di y = \lambda_1 \phi_1(x).
    $$
\end{itemize}
Then,
$$
U_2 \leq U_1.
$$
\end{prop}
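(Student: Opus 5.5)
The plan is to linearize the inequality around $U_1$ using concavity of $g$, and then conclude with a sliding (or maximum-principle) argument against the positive principal eigenfunction $\phi_1$ given by the stability hypothesis. Set $w \coloneqq U_2 - U_1$. Since $U_2$ is upper semi-continuous and $U_1$ is lower semi-continuous, $w$ is upper semi-continuous on the compact set $\X$, hence bounded from above. The super/sub-solution hypothesis can be rewritten as
$$
w(x) \leq \int_y k(x,y)\big(g(U_2(y)) - g(U_1(y))\big)\di y, \quad \forall x \in \X.
$$

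The first key step is a pointwise bound on the integrand, namely
$$
g(U_2) - g(U_1) \leq g'(U_1)\, w^+ ,
$$
where $w^+ = \max(w,0)$. Where $w \geq 0$, this follows from concavity of $g$, which gives $g(U_2)-g(U_1) \leq g'(U_1)(U_2-U_1)$. Where $w<0$, monotonicity of $g$ gives $g(U_2)-g(U_1)\leq 0$, and the right-hand side is $\geq 0$ because $g'\geq 0$. Since $k>0$, we obtain
$$
w(x) \leq \int_y k(x,y)\, g'(U_1(y))\, w^+(y)\di y \quad \text{for all } x.
$$
The integral is well defined: $g'\circ U_1$ is measurable, since $g'$ is continuous and $U_1$ is lower semi-continuous, and it is bounded, since $U_1$ is bounded and $g'$ is continuous.

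Next I check that $\phi_1$ is bounded away from $0$ and that $\lambda_1<1$. The eigen-equation gives $(1-\lambda_1)\phi_1 = \int_y k(\cdot,y)g'(U_1(y))\phi_1(y)\di y \geq 0$. If this integral vanished identically, then $g'(U_1)\phi_1 = 0$ almost everywhere and the inequality above would directly give $w\leq 0$. So we may assume it is positive, which forces $\lambda_1 \in (0,1)$. Then $\phi_1$ equals a positive multiple of an integral against the continuous kernel $k$, so $\phi_1$ is continuous, and it is positive by assumption. Hence $\min_\X \phi_1 >0$.

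Finally, the sliding argument. Since $w$ is bounded above and $\min \phi_1>0$, the quantity $\gamma^\ast \coloneqq \inf\{\gamma\geq 0 : w \leq \gamma \phi_1\}$ is finite, and $w\leq \gamma^\ast\phi_1$. Suppose for contradiction that $\gamma^\ast>0$. Then $w^+ \leq \gamma^\ast \phi_1$, and plugging this into the linearized inequality yields
$$
w \leq \gamma^\ast \int_y k(\cdot,y) g'(U_1(y))\phi_1(y)\di y = \gamma^\ast(1-\lambda_1)\phi_1 .
$$
Since $\gamma^\ast(1-\lambda_1)<\gamma^\ast$, this contradicts the minimality of $\gamma^\ast$. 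Hence $\gamma^\ast = 0$, that is, $w\leq 0$, i.e. $U_2\leq U_1$.

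The main point requiring care is the linearization step: one must handle separately the region where $U_2<U_1$, using monotonicity, and the region where $U_2\geq U_1$, using concavity, so that only $w^+$ appears. One must also justify the regularity needed to make $\gamma^\ast$ finite, namely that $w$ is bounded above and that $\phi_1$ is bounded below. Both reduce to the semi-continuity and boundedness assumptions on $U_1,U_2$.
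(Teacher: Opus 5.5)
Your proof is correct. Its first half is the paper's argument: the paper also linearizes at $U_1$ by concavity, obtaining $z\ge\int_y k(\cdot,y)g'(U_1(y))z(y)\di y$ for $z=U_1-U_2$. Your case split producing $w^+$ is harmless but unnecessary, since the tangent-line inequality $g(U_2)-g(U_1)\le g'(U_1)(U_2-U_1)$ holds everywhere.

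The routes differ at the conclusion. The paper (Lemmas \ref{smp} and \ref{stab comp}) slides $-M\phi_1$ below $z$ until it touches at a point $x_0$; this is where the lower semicontinuity of $z$ enters. It then applies a strong maximum principle to force $z+M\phi_1\equiv 0$, which contradicts $\lambda_1>0$. You avoid both the touching point and the strong maximum principle. Because the kernel $k\,g'(U_1)$ is nonnegative, the bound $w\le\gamma^\ast\phi_1$ immediately improves to $w\le\gamma^\ast(1-\lambda_1)\phi_1$, contradicting the minimality of $\gamma^\ast$.

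This buys some generality. The paper's strong maximum principle needs $g'(U_1)>0$ everywhere; it is written with $g'(p)=e^{-p}$, and it holds for any strictly increasing concave $g$. Your argument only needs $g'\ge 0$, so it also covers a merely nondecreasing concave $g$ whose derivative vanishes somewhere. It uses the semicontinuity hypotheses only for measurability and an upper bound on $w$. You also correctly handle the degenerate case $\int_y k(\cdot,y)g'(U_1(y))\phi_1(y)\di y\equiv 0$, where necessarily $\lambda_1=1$ and $w\le 0$ follows directly.

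The paper's touching-point argument follows the classical maximum-principle template and gives a strong maximum principle as a by-product. For this statement, though, your one-step contraction is shorter and less demanding.
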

We recall that the notion of {\em principal eigenvalue} was explained in the introduction, section~\ref{sec sir classical}. The connection between maximum/comparison principles and the sign of the principal eigenvalue is classical, at least for elliptic and parabolic operators (we refer to \cite{protter2012maximum} for more details). \\

We postpone the proof of Proposition \ref{prop comp} for later, we now show how it implies Proposition~\ref{prop:tildeU}.

\begin{proof}[Proof of Proposition \ref{prop:tildeU}]

To prove that $S_\infty \leq S_\infty^\star$, we do the following change of function: we define, for $M>0$
$$
U^\star \coloneqq -\ln\left(\frac{S^\star_\infty}{S_0}\right),\quad U_M \coloneqq \min\left\{ -\ln\left(\frac{S_\infty}{S_0}\right),M\right\}.
$$
Observe that we have $0\leq U^\star$, $U_M$ and these functions are also bounded from above. For $U_M$ the boundedness comes from its definition (taking the minimum with a constant is necessary, indeed $S_\infty$ could vanish). We also have that $U_M$ is lower semi-continuous (this comes from the fact that $S_\infty$ is upper-semi continuous, as it is the non-increasing limit when $t$ goes to $+\infty$ of the continuous functions $S(t,\cdot)$).

The function $U^\star$ is bounded and continuous due to Lemma \ref{req Sinfty}, which states that $S_\infty^\star$ is continuous and has strictly positive infimum.\\

Our goal is now to show that, up to taking $M$ large enough, we can apply Proposition \ref{prop comp} to $U_M$, $U^\star$ to get $U^\star \leq U_M$.\\

{\em Step 1. Showing that $U^\star,U_M$ are super and subsolution of a common equation.}\\
It follows from equation \eqref{syst}$_1$ that $\pa_t S \le -\nu$, hence 
$$
 S(t,x) \le S_0(x) -\int_0^t \nu(\tau,x)\,\di \tau,\quad \forall t>0,\ x\in \X.
$$
Therefore,
$$
\int_0^\infty \frac{\nu(t,x)}{S(t,x)}\,\di t \ge \int_0^{\infty} \frac{\nu(t,x)}{S_0(x)-\int_0^t\nu(\tau,x)\di \tau} \di t=\ln \Biggl(\frac{S_0(x)}{S_0(x)-\nu_{\infty}(x)}\Biggr).
$$
Now, plotting this in the expression giving $S_\infty$, \eqref{lim S}, we get
\begin{multline*}
S_\infty(x) \leq S_0(x)\exp\Big(\int_{\mathcal X}\frac{\beta(x,y)}{\mu(y)}\,S_\infty(y)\,\di y -\int_{\mathcal X}\frac{\beta(x,y)}{\mu(y)}\,(I_0 + S_0)(y)\,\di y \\ + \int_\X \frac{\beta(x,y)}{\mu(y)}\nu_\infty(y)\, \di y - \ln \left(\frac{S_0(x)}{S_0(x)- \nu_\infty (x)}\right)\Big).
\end{multline*}
On the other hand, we have, thanks to Lemma \ref{req Sinfty},
\begin{multline*}
S^\star_\infty(x) = S_0(x)\exp\Big(\int_{\mathcal X}\frac{\beta(x,y)}{\mu(y)}\,S^\star_\infty(y)\,\di y -\int_{\mathcal X}\frac{\beta(x,y)}{\mu(y)}\,(I_0+ S_0)(y)\,\di y \\ + \int_\X \frac{\beta(x,y)}{\mu(y)}\nu_\infty(y)\, \di y - \ln\left(\frac{S_0(x)}{S_0(x)- \nu_\infty (x)}\right)\Big).
\end{multline*}
This rewrites as
$$
U^\star(x) = \int k(x,y)g(U^\star(y))\di y + R(x),
$$
where
$$
k(x,y) = \frac{\beta(x,y)S_0(y)}{\mu(y)},\ R(x)= \int_{\mathcal X}\frac{\beta(x,y)}{\mu(y)}\,I_0(y)\,\di y  - \int_\X \frac{\beta (x,y)}{\mu (y)}\nu_\infty(y)\, \di y + \ln\left (\frac{S_0(x)}{S_0(x)- \nu_\infty (x)}\right),
$$
and
$$
g(x) = 1-e^{-x}.
$$
Now, for all $x\in \X$ such that $\displaystyle U_M(x) = -\ln\left (\frac{S_\infty(x)}{S_0(x)}\right)$, using $g\left(-\ln\left(\frac{S_\infty}{S_0}\right)\right)=\frac{S_0-S_\infty}{S_0}$, we find
$$
U_M(x) \geq \int_{y\in \X} k(x,y)g\left(-\ln\left(\frac{S_\infty(y)}{S_0(y)}\right)\right)\di y + R(x).
$$
Since $\displaystyle U_M(y) \leq -\ln\left (\frac{S_\infty(y)}{S_0(y)}\right)$ for all $y$, and using that $g$ is non-decreasing, we indeed have, for all $x\in\X$ such that $\displaystyle U_M(x) = -\ln\left (\frac{S_\infty(x)}{S_0(x)}\right)$,
$$
U_M(x) \geq \int k(x,y)g(U_M(y)))\di y + R(x).
$$
Now, for all $x\in \X$ such that $U_M(x)=M$, that is, $\frac{S_\infty(x)}{S_0(x)}\leq e^{-M}$, up to taking
$$
M \geq \|k\|_{L^\infty}\vert \X\vert + \|R\|_{L^\infty},
$$
we indeed have (remember that $g\leq 1$)
$$
U_M(x) \geq \int k(x,y)g(U_M(y))\di y + R(x).
$$
Therefore, the first condition in Proposition \ref{prop comp} is verified.

\medskip
{\em Step 2. The stability condition.}\\
Let us consider now the operator
$$
L_M \ : \ \phi \in C^0(\X) \mapsto \phi - \int_y k(\cdot,y)g^\prime(U_M(y))\phi(y)\di y,
$$
and we denote $\lambda_M$ its principal eigenvalue. First, we prove that, for $M$ large enough, we have $\lambda_M>0$.\\

We start by considering the following operator $L^\star$
    $$
L^\star \ : \ \phi \in C^0(\X) \mapsto \phi - \int_y  \frac{\beta(y,\cdot)S_\infty(y) }{\mu(y)}\phi(y)\di y.
$$
Let $\lambda,\phi$ be a principal eigenvalue and positive eigenfunction for $L^\star$. Let us prove that $\lambda>0$. We argue by contradiction and assume that $\lambda\leq 0$. We denote $e(t) \coloneqq \int_x\frac{1}{\mu(x)} \phi(x)I(t,x)\di x$. We know that $e(t) \to 0$ when $t$ goes to $+\infty$ due to Proposition \ref{th lim S}.

However, multiplying the equation for $I$ in \eqref{syst} by $\frac{\phi}{\mu}$ and integrating over $x$ yields
$$
\partial_t \int_x\frac{I(t,x)\phi(x)}{\mu(x)}\di x = \int_x \int_y \frac{\beta(x,y)S(t,x) \phi(x)}{\mu(x)}I(t,y)\di y \di x - \int_x I(t,x)\phi(x)\di x.
$$
Hence, 
$$
\partial_t \int_x\frac{I(t,x)\phi(x)}{\mu(x)}\di x \geq \int_x\left( \int_y  \frac{\beta(y,x)S_\infty(y) }{\mu(y)}\phi(y)\di y -  \phi(x) \right)I(t,x)\di x = -\int_x L^\star(\phi)(x)I(t,x)\di x.
$$
This rewrites
$$
e^\prime(t) \geq - \lambda \int_x I(t,x)\phi(x)\di x \geq -\lambda (\max \mu)e(t).
$$
Because we assumed that $\lambda \leq 0$, this implies that $e(t)$ is non-decreasing, hence it is bounded from below by $e(0)>0$, which is in contradiction with $e(t)\underset{t\to+\infty}{\longrightarrow} 0$. Thus, by contradiction, $\lambda>0$.\\

Next, define the operator 
$$
L \ : \ \phi \in C^0(\X) \mapsto \phi - \int_y k(x,y)g^\prime\left(-\ln\left(\frac{S_\infty(y)}{S_0(y)} \right)\right)\phi(y)\di y = \phi - \int_y \frac{\beta(x,y)}{\mu(y)}S_\infty(y)\phi(y)\di y.
$$
This operator is the adjoint of $L^\star$ in $L^2(\X,\frac{S_\infty(x)}{\mu(x)}\di x)$, hence they have the same principal eigenvalue $\lambda$, which is strictly positive as we just proved.\\

Let us now show that $\lambda_M \underset{M\to+\infty}{\longrightarrow} \lambda$. For each $M>0$, we take $\phi_M$ to be a non-negative principal eigenfunction associated with $\lambda_M$, that we normalize so that $\|\phi_M\|_{L^\infty(\X)}=1$.\\

First, let us show that $(\lambda_M)_{M>0}$ is increasing with respect to $M$. Take $M<M'$. Let us show that $\lambda_M \leq \lambda_{M'}$. We take $\e>0$ such that $\e = \sup\{\eta>0 \ : \  \phi_{M} > \eta \phi_{M'}\}$. We have that $\phi_{M} -\e\phi_{M'} \geq 0$ and there is a contact point $x_0$ such that $\phi_{M}(x_0) -\e\phi_{M'}(x_0)=0$. Because $U_M\leq U_{M'}$ (by definition) and because $g^\prime$ is non-increasing, we have, for all $x\in \X$,
$$
\phi_{M}(x) -\e\phi_{M'}(x) \geq \int_y k(x,y)g^\prime(U_{M'}(y))(\phi_{M} -\e\phi_{M'})(y)\di y + \lambda_{M} \phi_{M}(x) - \e \lambda_{M'} \phi_{M'}(x).
$$
Assume, by contradiction, that $\lambda_M' < \lambda_M$. This would yield
$$
\phi_{M}(x) -\e\phi_{M'}(x) \geq \int_y k(x,y)g^\prime(U_{M'}(y))(\phi_{M} -\e\phi_{M'})(y)\di y + \lambda_{M'} (\phi_{M}(x) - \e  \phi_{M'}(x)).
$$
Evaluating this at the contact point $x_0$ would lead to $\phi_{M}=\e\phi_{M'}$ and then to $\lambda_{M'}=\lambda_M$, which would be a contradiction, hence the result.\\

Therefore, the eigenvalue $\lambda_M$ increases with $M$. It is also easy to see that, for all $M>0$, we have $\lambda_M \leq 1$. We can then consider its limit $\lambda_\infty := \lim_{M\to+\infty}\lambda_M$. It is readily seen that the family of eigenfunction $(\phi_M)_{M>0}$ is uniformly equicontinuous (and bounded by hypothesis), so that it converges, up to extraction, uniformly as $M\to+\infty$ to a non-negative function $\phi_\infty$ that satisfies
$$
\phi_\infty - \int_y \frac{\beta(\cdot,y)}{\mu(y)}S_\infty(y)\phi_\infty(y)\di y = \lambda_\infty \phi_\infty.
$$
Moreover, by uniqueness of the principal eigenvalue (owing to the Krein-Rutman theorem), we have $\lambda_\infty = \lambda>0$.\\

Therefore, up to taking $M$ large enough, we find that $\lambda_M>0$, which concludes this proof.
\end{proof}
We now prove the comparison principle, Proposition \ref{prop comp}. The proof relies on two lemmas.
\begin{lem}[Strong maximum principle]\label{smp}
Let $p, z$ be two non-negative, bounded functions, with $z$ lower semi-continuous, such that
$$
z(x) \geq \int_{y\in\X} k(x,y)g^\prime(p(y)) z(y)\di y,\quad \forall x \in \X.
$$
Then, if there is $x_0 \in \X$ such that $z(x_0)=0$, we have $z\equiv 0$.
\end{lem}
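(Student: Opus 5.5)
The plan is to first show that the integral inequality evaluated at $x_0$ forces $z$ to vanish almost everywhere, and then to upgrade this to vanishing everywhere using the lower semi-continuity of $z$.

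\emph{Setup.} Since $z$ is lower semi-continuous, it is Borel measurable; since $p$ is bounded, $g^\prime(p(\cdot))$ is bounded, so the integral makes sense. Since $\X$ is compact and $k$ is continuous and positive, we have $k\geq k_{\min}>0$ on $\X\times\X$. Set $P\coloneqq \sup_\X p<+\infty$. Because $g$ is concave, $g^\prime$ is non-increasing, so $g^\prime(p(y))\geq g^\prime(P)$ for every $y$. We need $g^\prime(P)>0$. For the function $g(s)=1-e^{-s}$ used in Proposition \ref{prop:tildeU} this is immediate, since $g^\prime(P)=e^{-P}>0$. In general it holds as soon as $g$ is strictly increasing on $[0,P]$: a concave $C^1$ increasing function with $g^\prime(P)=0$ would satisfy $g^\prime\equiv0$ on $[P,\infty)$. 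I will state this non-degeneracy explicitly, since it is what the argument uses.

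\emph{Step 1: $z=0$ almost everywhere.} Evaluate the hypothesis at $x_0$, where $z(x_0)=0$. Using $z\geq 0$, we get
$$
0 = z(x_0)\geq \int_\X k(x_0,y)g^\prime(p(y))z(y)\di y \geq k_{\min}\, g^\prime(P)\int_\X z(y)\di y \geq 0 .
$$
Since $k_{\min}\,g^\prime(P)>0$, this gives $\int_\X z=0$. As $z$ is non-negative, $z=0$ almost everywhere on $\X$.

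\emph{Step 2: $z=0$ everywhere.} Fix any $x\in\X$. The set $\{z=0\}$ has full measure in $\X$, and every relative neighbourhood of $x$ in the interval $\X$ has positive measure. Hence there is a sequence $y_n\to x$ with $z(y_n)=0$. Lower semi-continuity means $z(x)\leq\liminf_{y\to x}z(y)$, so
$$
z(x)\leq \liminf_{n} z(y_n)=0 .
$$
Together with $z\geq0$ this gives $z(x)=0$. Since $x$ was arbitrary, $z\equiv0$ pointwise, which is the stated conclusion.

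The main obstacle is the positivity of $g^\prime(p)$ that Step 1 needs. This is why I use the boundedness of $p$ together with the monotonicity of $g^\prime$, rather than a pointwise bound. Passing from an almost-everywhere statement to a pointwise one is then exactly where the lower semi-continuity of $z$ enters.
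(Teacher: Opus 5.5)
Your proof is correct and follows the same route as the paper: evaluate the inequality at $x_0$ and use positivity of $k(x_0,\cdot)\,g'(p(\cdot))$. It is more careful than the paper, which asserts that the integrand vanishes for \emph{every} $y$ when this only follows almost everywhere; your Step 2 via lower semi-continuity (a hypothesis the paper states but never uses) is exactly what closes that gap. One side remark is slightly off: strict increase of $g$ on $[0,P]$ alone does not force $g'(P)>0$ (take $g(s)=2s-s^2$ for $s\le 1$ and $g\equiv 1$ for $s\ge 1$, with $P=1$), so the correct condition is strict increase on some $[0,P+\delta]$, which holds for $g(s)=1-e^{-s}$ and for any strictly increasing concave $g$ on $\R$.
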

\begin{proof} The proof is straightforward: if $z(x_0)=0$, then we have $\int k(x_0,y)g^\prime(p(y)) z(y)\di y \leq 0$. Since the quantities under the integral are all nonnegative, we find that, for all $y\in \X$, we have $k(x_0,y)g^\prime(p(y)) z(y)=0$. This implies that $z\equiv 0$, because $k(x_0,y)g^\prime(p(y))=k(x_0,y)e^{-p(y)}>0$ for all $y\in \X$.
\end{proof}

\begin{lem}[Stability implies comparison]\label{stab comp}
    Let $p$ be a non-negative, bounded function. Assume that the principal eigenvalue of the operator
    $$
    \phi\in C^0(\X) \to \phi - \int_y k(\cdot,y) g^\prime(p(y)) \phi(y)\di y 
    $$
    is strictly positive.

    Then, if $z$ is a bounded, lower semi-continuous function such that, for all $x\in\X$,
    $$
    z(x) \geq \int_y k(x,y) g^\prime(p(y))z(y)\di y,
    $$
    we have $z\geq 0$. 
\end{lem}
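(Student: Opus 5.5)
We argue with the classical ``sliding'' (or ``touching'') method against a positive principal eigenfunction. Write $a(y) := k(\cdot,y)g^\prime(p(y))$. Let $\lambda_1>0$ and $\phi_1>0$ be the principal eigenpair given by the hypothesis, so that
$$
\phi_1(x) - \int_y k(x,y)g^\prime(p(y))\phi_1(y)\di y = \lambda_1\phi_1(x).
$$
First observe that $\phi_1$ is continuous on the compact set $\X$ and strictly positive (by Krein--Rutman, or directly since $\phi_1 = \frac{1}{1-\lambda_1}\int k g^\prime(p)\phi_1$ with $k>0$, $g^\prime(p)=e^{-p}>0$). Hence $\min_\X \phi_1>0$. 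Note also that $\lambda_1<1$ by the same identity, since the integral term is positive.

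Assume by contradiction that $z$ takes a negative value somewhere. Since $z$ is bounded and $\phi_1$ has positive minimum, the quantity
$$
\e^\star := \inf\{\e\geq 0 \ : \ z + \e\phi_1 \geq 0 \text{ on } \X\}
$$
is finite and strictly positive. Set $w := z+\e^\star\phi_1$. Then $w\geq 0$, and $w$ is lower semi-continuous as the sum of a lower semi-continuous function and a continuous one. Since $\X$ is compact, $w$ attains its minimum. That minimum is $0$: otherwise $w\geq \eta>0$, and then $z+(\e^\star-\eta/\max\phi_1)\phi_1\geq 0$, contradicting the minimality of $\e^\star$. So there is $x_0$ with $w(x_0)=0$.

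Next we compute the inequality satisfied by $w$. Combining the supersolution inequality for $z$ with the eigen-equation gives, for all $x$,
$$
w(x) \geq \int_y k(x,y)g^\prime(p(y))w(y)\di y + \e^\star\lambda_1\phi_1(x).
$$
Evaluating at $x_0$, the left-hand side vanishes. On the right, the integral is nonnegative because $w\geq 0$, while $\e^\star\lambda_1\phi_1(x_0)>0$. This is a contradiction, so $z\geq 0$. (Alternatively, dropping the positive term, Lemma \ref{smp} gives $w\equiv 0$, i.e.\ $z=-\e^\star\phi_1$. Plugging this into the inequality for $z$ yields $-\e^\star\lambda_1\phi_1\geq 0$, again a contradiction.)

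The only delicate point is attainment of the minimum of $w$: $z$ is only lower semi-continuous, so we must rely on lower semi-continuity plus compactness rather than continuity, and we need $\phi_1$ continuous and bounded below. Both are ensured by the setting, since the eigenfunction lies in $C^0(\X)$ and the integral operator has a positive kernel. Measurability of $z$, needed for the integral to make sense, follows from lower semi-continuity.
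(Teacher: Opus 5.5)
Your proof is correct and is essentially the paper's argument: you slide the positive principal eigenfunction under $z$ until it touches at a point $x_0$ (attained by lower semi-continuity and compactness), then derive a contradiction at $x_0$. The only difference is that you conclude directly from the strictly positive term $\e^\star\lambda_1\phi_1(x_0)$, whereas the paper first applies Lemma~\ref{smp} to get $z+M\phi\equiv 0$ and then contradicts $\lambda>0$, which is exactly the alternative you mention.
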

\begin{proof}
Let $\lambda>0$ be the principal eigenvalue of the operator and let $\phi\in C^0(\X)$ be an associated positive eigenfunction. We have
$$
\phi = \int_y k(\cdot,y)g^\prime(p(y))\phi(y)\di y +\lambda \phi.
$$
    Let $M \coloneqq \inf\left\{ m \in \R \ : \ z \geq - m\phi \right\}$. Owing to the lower semi-continuity of $z$, we have $z\geq - M\phi$ and there is $x_0\in \X$ such that $z(x_0)+ M\phi(x_0) = 0$.

    If $M\leq 0$, the result follows. Assume by contradiction that $M>0$. Then, because we have (thanks to the non-negativity of $\lambda,\phi)$
    $$
    z+M\phi \geq \int_y k(\cdot,y) g^\prime(p(y))(z(y)+M\phi(y))\di y, 
    $$
    the strong maximum principle, Lemma \ref{smp}, applied to $z+M\phi$ yields that $z+M\phi \equiv 0$. Hence,
    $$
    \phi(x) \leq \int_y k(x,y) g^\prime(p(y)) \phi(y)\di y,\quad \forall x\in \X,
    $$
    which implies that $\lambda \phi \equiv 0$, which is a contradiction. The result is proven.
\end{proof}

We now can use the previous lemmas to prove Proposition \ref{prop comp}.
\begin{proof}[Proof of Proposition \ref{prop comp}]
Let $z = U_1 - U_2$, where $U_1,U_2$ are as required in Proposition \ref{prop comp}. The function $z$ is therefore bounded and lower semi-continuous. Because $g$ is concave, we have, for all $x\in\X$,
$$
z(x) \geq \int_y k(x,y)(g(U_1(y)) - g(U_2(y)))\di y \geq  \int_y k(x,y) g^\prime(U_1(y))z(y)\di y.
$$
The result then follows from Lemma \ref{stab comp}.
\end{proof}

\subsubsection{Construction of a maximizing sequence}\label{sequence}

Proposition \ref{prop:tildeU} shows that, for any $\nu \in \V$, we can build $\nu_\infty \in \V^\star$ as $\nu_\infty(x) := \int_0^\infty \nu(\tau,x)\di \tau$ and we have $\mathcal N(\nu)\leq \mathcal N^\star(\nu_\infty)$. 

Therefore, the supremum in the optimal value problem \eqref{ocp} is lesser than the supremum in the initial value problem \eqref{ocp:equivalent}.

This section is dedicated to proving that the two supremum are actually equal. To do so, we shall prove that for any $v \in \V^\star$, there is a sequence of functions $\nu_\e \in \V$ such that $\mathcal N(\nu_\e)\underset{\e\to0}{\longrightarrow} \mathcal N^\star(v)$.
\begin{prop}[Construction of a maximizing sequence]\label{prop:v_eps}
Let $v \in \V^\star$ be fixed. Then, there is a sequence of admissible controls $(\nu_\e)_{\e>0} \in \V$ such that
$$
\mathcal{N}(\nu_\e)\underset{\e \to 0}{\longrightarrow} \mathcal N^\star(v).
$$
\end{prop}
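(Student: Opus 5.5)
The plan is to compress the vaccination into a short initial window and then pass to the limit $\e\to0$. A general $v\in\V^\star$ is only $L^1$ and may touch $S_0$, so I first approximate it. I pick continuous $v_n$ with $0\le v_n\le (1-\tfrac1n)S_0$ and $v_n\to v$ in $L^1(\X)$; truncation plus mollification works. The first task is to show $\mathcal N^\star(v_n)\to\mathcal N^\star(v)$, which is a continuity statement for the IVP. A diagonal argument then reduces the problem to a continuous $v$ with $v\le(1-\delta)S_0$.

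For such a $v$, fix a smooth $\rho\ge0$ supported in $[0,1]$ with $\int\rho=1$, and set $\rho_\e(t)=\e^{-1}\rho(t/\e)$. I would not simply take $\nu_\e=\rho_\e(t)v(x)$, because admissibility needs $S\ge0$ during the window. Instead I use the proportional form $\nu_\e(t,x)=\rho_\e(t)\,c(x)\,S(t,x)$, with $c$ chosen so that $\int_0^\infty\nu_\e\,\di t=v$ exactly. Without infection, $S$ would become $S_0e^{-c\int_0^t\rho_\e}$. So the natural choice is $c=-\ln(1-v/S_0)$, which is continuous and bounded since $v\le(1-\delta)S_0$. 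Because the control depends on $S$, I would either treat the system as a feedback system (its well-posedness follows as in Proposition \ref{prop existence}), or fix $c$ and use an open-loop $\nu_\e$ defined from the infection-free solution.

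In the open-loop version, $S$ stays nonnegative as long as $\int_0^t\nu_\e\le S_0-\text{(infections)}$. Since infections during $[0,\e]$ are $O(\e)$ and $v\le(1-\delta)S_0$, this holds for small $\e$. Then $\nu_\e\in\V$, and the mass $\int\!\!\int\nu_\e$ equals $\int v$ up to an $O(\e)$ correction. A small rescaling fixes the mass if needed; the constraint only asks for $\le K$.

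The convergence step goes as follows. On $[0,\e]$, the estimates from Proposition \ref{prop existence} give $|S(\e)-(S_0-v)|+|I(\e)-I_0|=O(\e)$ uniformly in $x$. After time $\e$ the system is \eqref{syst 0}, started from data within $O(\e)$ of $(S_0-v,I_0)$. So $\mathcal N(\nu_\e)=\int S_\infty^{\e}+\int v+O(\e)$, where $S_\infty^\e$ is the final state of \eqref{syst 0} from these perturbed data.

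The main obstacle is therefore the continuity of the final size $S_\infty$ with respect to the initial data, since finite-time continuity does not pass to $t=\infty$ automatically. I would handle it through the fixed-point characterization \eqref{S infty opt}. Write $U=-\ln(S_\infty/S_0')$; the perturbed equations differ only by $O(\e)$ in $R$ and in $k$. Limit states are bounded, so by compactness (equicontinuity of the integral operators) every subsequence of $U^\e$ converges to a solution of the limit equation. Uniqueness of that solution follows from the stability argument of Proposition \ref{prop:tildeU}, namely positivity of the principal eigenvalue at the limit state combined with Proposition \ref{prop comp} in both directions. This gives $S_\infty^\e\to S_\infty^\star$, and the same argument yields the continuity in $v_n$ needed in the first step.
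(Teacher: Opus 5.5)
Your proposal is correct and follows essentially the paper's route: compress the vaccination into a window of length $\e$, show the state at the end of the window is $O(\e)$-close to $(S_0-v,I_0)$, pass to the limit in the final-size fixed-point equation by Arzel\`a--Ascoli plus uniqueness, and reach general $v\in\V^\star$ by $L^1$ density together with continuity of $\mathcal N^\star$. The deviations are minor. First, your proportional/open-loop control is unnecessary: for continuous $v\le(1-\delta)S_0$, the paper's separable choice $\nu_\e=\e^{-1}\phi(t/\e)v(x)$ is already admissible for small $\e$ by exactly the $O(\e)$ estimate you use. Second, if you get uniqueness from Proposition \ref{prop comp} ``in both directions'', you must also check stability at the second solution $U$; this is not covered by the dynamical argument of Proposition \ref{prop:tildeU}, but it follows from concavity, since $g(U)\ge g'(U)U$ gives $U-\int k\,g'(U)\,U\ge R>0$.
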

\begin{proof}
    We start with considering $v\in \V^\star$ such that $v$ is also continuous and that $v<S_0$.

   Let $\phi : \R^+ \to \R^+$ be continuous, such that $\int_0^\infty \phi = 1$ and $\phi$ is compactly supported in $[0,1]$. For $\e>0$, we define
   $$
   \nu_\e(t,x) = \frac{1}{\e}\phi\left(\frac{t}{\e}\right)v(x). 
   $$
    This control need not be admissible for \eqref{ocp} (i.e., it is not guaranteed that $\nu_\e \in \V$) for all $\e>0$  {\em a priori}, but we shall see that it is admissible for $\e>0$ small enough.\\ 

    We call $(S_\e,I_\e)$ the solution of \eqref{syst} with the control $\nu_\e$ and initial datum $(S_0,I_0)$. We define the rescaled functions
    $$
    \tilde S_\e(t,x) := S_\e(\e t,x) \quad \text{and}\quad  \tilde I_\e(t,x) := I_\e(\e t,x).
    $$
    These functions satisfy
\begin{equation}\label{eq:sirv_rescaled}
\left\{
\begin{array}{rll}
\pa_t \tilde S_\e(t,x) &= -\e \tilde S_\e(t,x)\int_{\mathcal{X}}\beta(x,y)\,\tilde I_\e(t,y)\,\di y-\nu_1(t,x),  \quad &t>0,\ x\in \X, 
\\
\pa_t \tilde I_\e (t,x) &= \e \tilde S_\e(t,x)\int_{\mathcal{X}}\beta(x,y)\, \tilde I_\e(t,y)\,\di y -\e \mu(x)\, \tilde I_\e(t,x),  \quad &t>0,\ x\in \X,
\end{array}
\right.
\end{equation}
that is, they solve also a SIR model but with $\beta$,$\mu$ replaced by $\e\beta$, $\e \mu$, and $\nu_1$ independent of $\e$. \\

The idea of the proof, from now on, is to show that $(\t S_\e,\t I_\e)$ behaves ``similarly" to the solution of \eqref{syst 0} with initial datum $S_0-v$ when $\e>0$ is small enough.

This is done in two steps. First, we focus on \eqref{eq:sirv_rescaled} for $t\in[0,1]$, and we prove that $\t S_\e(1,\cdot) \approx S_0 - \int_0^1 \nu_1 = S_0 - v$ and $\t I_\e(1,\cdot) \approx I_0$. In a second step, we focus on the case where $t>1$: for such $t$, the term $\nu_1$ is zero. \\

To start, arguing as in the first step of the proof of Proposition \ref{prop existence} (existence of solutions), we can prove that, up to taking $\e>0$ small enough, there is a solution to \eqref{eq:sirv_rescaled} for $t \in [0,1]$ and $\|\t S_\e(t,\cdot)+\t I_\e(t,\cdot)\|_{L^\infty} \leq 2 \|S_0+I_0\|_{L^\infty}$ for $t\in[0,1]$. This solution need not be positive {\em a priori} (we will see later that this is actually the case for $\e>0$ small enough).\\

\medskip
{\em Step 1. Estimates between $t=0$ and $t=1$.}

\noindent Let us show that there is $A>0$, independent of $\e$, such that
\beq \label{approx:SI}
\sup_{x\in\X}\vert \tilde S_\e(1,x) - (S_0(x) - \nu_\infty(x)) \vert + \sup_{x\in\X}\vert \tilde I_\e(1,x) -  \tilde I_0(x)\vert < A\e.
\eeq
Let $C>0$ be such that $\vert \tilde S_\e(t,x)\vert +\vert \tilde I_\e(t,x)\vert\le C$, for every $t\in [0,1], x \in \mathcal X$. Equation \eqref{eq:sirv_rescaled}$_2$ gives
$$
| \pa_t \tilde I_\e(t,x)|\le \e \biggl(C^2 \|\beta\|_{L^\infty}\vert \X\vert+\| \mu \|_{L^\infty} C\biggr)\le \e \, \tilde C,\quad \forall t\in [0,1],\ x\in \X,
$$
for some $\tilde C>0$, hence
$$
I_0(x)-\e\,\tilde C \le \tilde I_\e(1,x) \le I_0(x)+\e\,\tilde C,\quad \forall t\in [0,1],\ x\in \X.
$$
Up to increasing $\t C$, it follows from the equation on $\t S_\e$, \eqref{eq:sirv_rescaled}$_{1}$, that
$$
|\pa_t\tilde S_\e(t,x)+\nu_1(t,x)| = \e \vert \tilde  S_e \int \beta \tilde I_\e - \mu\tilde I_\e\vert  \leq   \e \, \tilde C, \quad \forall t\in [0,1],\ x\in \X.
$$
Therefore,
$$
-\e\,\tilde C \le \tilde S_\e(1,x)-S_0(x)+\int_0^1\nu_1(t,x)\,\di t \le \e\,\tilde C, \quad \forall t\in [0,1],\ x\in \X,
$$
thus \eqref{approx:SI} holds true. \\

\medskip
{\em Step 2. Estimates for $t>1$.}

\noindent Now, we consider the SIR model \eqref{eq:sirv_rescaled} above for $t>1$. Because $\nu_1 \equiv 0$ when $t>1$, the functions $\t S_\e,\t I_\e$ satisfy
\begin{equation*}
\left\{    
\begin{array}{rll}
\pa_t \tilde S_\e(t,x) &= -\e \tilde S_\e(t,x)\int_{\mathcal{X}}\beta(x,y)\,\tilde I_\e(t,y)\,\di y,  \quad &t>1,\ x\in \X,
\\
\pa_t \tilde I_\e (t,x) &= \e \tilde S_\e(t,x)\int_{\mathcal{X}}\beta(x,y)\, \tilde I_\e(t,y)\,\di y -\e \mu(x)\, \tilde I_\e(t,x),  \quad &t>1,\ x\in \X,
\end{array}
\right.
\end{equation*}
with ``initial" datum $(\t S_\e(1,\cdot),\t I_\e(1,\cdot))$. This is the classical SIR system (without vaccination). We have that the limit $\t S^\e_\infty(x) \coloneqq\lim_{t\to+\infty} \t S_\e(t,x)$ satisfies
\begin{equation}\label{lim e}
\t S^\e_\infty(x) = \t S_\e(1,x)  \exp\left(\int_{\mathcal X}\frac{\beta(x,y)}{\mu(y)}\, \t S^\e_\infty(y)\,\di y-\int_{\mathcal X}\frac{\beta(x,y)}{\mu(y)}\,(\t I_\e(1,y)+\t S_\e(1,y))\, \di y\right).
\end{equation}
Because $\beta$ is continuous (and since $\X$ is bounded and that every quantity in \eqref{lim e} is bounded), we have that the family $(S^\e_\infty)_\e$ is equicontinuous and uniformly bounded. By the Arzel\`a-Ascoli theorem, it converges when $\e \to 0$ to some function $\sigma$. It follows from what was proven in the first step that $\t S_\e(1,\cdot) \underset{\e \to 0}{\longrightarrow} S_0 -v$ and $\t I_\e(1,\cdot) \underset{\e \to 0}{\longrightarrow} I_0$. Hence, $\sigma$ satisfies, for all $x\in \X$,
$$
\sigma(x) = (S_0(x) - v(x))  \exp\left(\int_{\mathcal X}\frac{\beta(x,y)}{\mu(y)}\,\sigma(y)\,\di y-\int_{\mathcal X}\frac{\beta(x,y)}{\mu(y)}\,(I_0(1,y)+ S_0(y) - v(y))\, \di y\right).
$$
This is the equation \eqref{S infty opt} satisfied by $S^\star_\infty$. There is a unique solution to this equation (this is a classical fact from the literature on the SIR model, we refer for instance to \cite{ducasse2022threshold}. However, we mention that it could also be obtained directly using our comparison principle Proposition \ref{prop comp}), hence $\sigma = S^\star_\infty$, that is,
$$
\lim_{\e \to 0} \left( \lim_{t\to +\infty}\tilde{S}_\e(t,x) \right) = S_\infty^\star
$$

\medskip
{\em Step 3. Conclusion}.

\noindent Recall that $S_\e$ is the solution of \eqref{syst} with control $\nu_\e$. Because $\t S_\e(t,x) = S_\e(\e t,x)$, we have $\lim_{t\to+\infty} S_\e(t,x) = \lim_{t\to+\infty} \t S_\e(t,x)$. Therefore, remembering that we called in the previous step $\lim_{t\to+\infty} \t S_\e(t,x) = \t S_\infty^\e(x)$, we have
   $$
   \mathcal N(\nu_\e) = \int_{\mathcal X} \t S_\infty^\e + \int_{\mathcal X}\int_t \nu_\e = \int_{\mathcal X} \t S_\infty^\e  + \int_{\mathcal X} v.
   $$
   Using the previous step, we then have
   $$
    \mathcal N(\nu_\e) \underset{\e \to 0}{\longrightarrow} \int_{\mathcal X}  S_\infty^\star  + \int_{\mathcal X} v = \mathcal{N}^\star(v),
   $$
   which concludes the proof for $v\in \V^\star$ such that $v$ is continuous and $v<S_0$.

   Because such control functions are dense for the $L^1$ convergence in $\V^\star$, we can conclude the proof by density (using similar arguments as above, we have that $\mathcal{N}^\star$ is continuous for the $L^1$ norm).
\end{proof}

Gathering Proposition \ref{prop:v_eps} with Proposition \ref{prop:tildeU}, we obtain Theorem \ref{th new opt}.

\subsection{Qualitative properties}

We now turn to the proof of Theorem \ref{th quali}. In the whole section, we assume that $S_0,I_0$ are fixed and that the hypotheses of Theorem \ref{th quali} hold true: $\beta$ depends only on its second variable, that is, $\beta(x,y)=\beta(y)$, we have 
    $$
    K< \min\left\{\frac{\mu}{\beta}\right\}\left( \int_\X\frac{\beta}{\mu}S_0 \right),
    $$
    and the set $\{\frac{\beta}{\mu}=s\}$ has zero Lebesgue measure, for each $s>0$.

Under these hypotheses, we shall completely solve the (IVP) \eqref{ocp:equivalent} (and then, also the (OVP) \eqref{ocp}).

In the whole section, for each $m\in [0,K]$, we denote $v_m\in \V^\star$ the function defined so that
\begin{equation}\label{vI}
v_m(x) :=S_0(x)\mathds{1}_{\frac{\beta}{\mu}>s_m},
\end{equation}
where $s_m>0$ is such that $\int_{\frac{\beta}{\mu}>s_m}S_0 = m$. Because of the hypothesis that the set $\{\frac{\beta}{\mu}=s\}$ has zero Lebesgue measure for each $s>0$, $s_m$ is well-defined.

We will prove Theorem \ref{th quali}, that is, we want to prove that, if $v_K$ is the function defined by \eqref{vI} with $m=K$, then, we have $v_K\in\V^\star$, $\int_\X v_K \leq K$ and
    $$
    \sup_{\substack{v\in\V^{\star} \\ \int_X v\leq K}}\mathcal{N}^\star(v)=\mathcal{N}^\star(v_K).
    $$
In other words, the function $v_K$ is a maximizer for the (IVP) \eqref{ocp:equivalent}.

The proof relies on several intermediate results. Before diving into the proof, we define two functions $\sigma_\infty,\sigma_0$ acting on $\V^\star$:
$$
\displaystyle\sigma_\infty(v) := \int_y \frac{\beta(y)}{\mu(y)}S_\infty^\star(y)\di y,\ \displaystyle \sigma_0(v) := \int_y \frac{\beta(y)}{\mu(y)}(S_0(y) - v(y))\di y.
$$
In the definition of $\sigma_\infty$, the function $S_\infty^\star$ is, as above, the limiting state for $S$, solution of \eqref{syst 0}, with initial datum $(S_0-v,I_0)$.

We start by rewriting the optimization problem \eqref{ocp:equivalent} into a more convenient form.
\begin{lem}\label{lem rewrite}
   We have
    \begin{equation}\label{new opt I}
    \begin{array}{rl}
    \displaystyle{\sup_{\substack{v\in \V^\star \\ \int_\X v \leq K}} \mathcal{N}^\star(v) }&=\displaystyle{ \sup_{m\in [0,K]}\sup_{\substack{v\in \V^\star \\ \int_\X v = m}} \mathcal{N}^\star(v)}\\
    &=  \displaystyle{\sup_{m\in [0,K]}\sup_{\substack{v\in \V^\star \\ \int_\X v = m}}\left\{ m + \left(\int_\X S_0 - m\right)e^{\sigma_\infty(v) - \sigma_0(v)}\eta\right\},}
    \end{array}
    \end{equation}
    where $\eta := \exp\left(-\int_y \frac{\beta(y)}{\mu(y)}I_0(y)\di y\right)$.
\end{lem}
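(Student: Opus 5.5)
The first equality in \eqref{new opt I} is only a reorganisation of the constraint set. The admissible set $\{v\in\V^\star,\ \int_\X v\leq K\}$ is the disjoint union over $m\in[0,K]$ of the slices $\{v\in\V^\star,\ \int_\X v = m\}$. A supremum over a union equals the supremum of the suprema over the pieces, so nothing more is needed there. The real content is the second equality. It is an identity, valid for each fixed $v\in\V^\star$ with $\int_\X v = m$:
$$
\mathcal{N}^\star(v) = m + \left(\int_\X S_0 - m\right)e^{\sigma_\infty(v)-\sigma_0(v)}\eta .
$$
Once this holds pointwise in $v$, taking suprema gives the claim.

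To prove the identity, I will start from Lemma \ref{req Sinfty}, which gives \eqref{S infty opt} for $S_\infty^\star$. Since $\beta(x,y)=\beta(y)$, the exponent in \eqref{S infty opt} no longer depends on $x$, and it equals
$$
\int_\X \frac{\beta}{\mu}S^\star_\infty - \int_\X\frac{\beta}{\mu}(S_0-v) - \int_\X \frac{\beta}{\mu}I_0 = \sigma_\infty(v)-\sigma_0(v) + \ln\eta .
$$
Hence $S^\star_\infty(x) = (S_0(x)-v(x))\,e^{\sigma_\infty(v)-\sigma_0(v)}\eta$ for every $x$. Integrating over $\X$ gives $\int_\X S^\star_\infty = \left(\int_\X S_0 - m\right)e^{\sigma_\infty(v)-\sigma_0(v)}\eta$. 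Adding $\int_\X v = m$ and recalling the definition of $\mathcal{N}^\star$ then yields the identity.

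The only step that needs care is the validity of \eqref{S infty opt} at points where $S^\star_\infty$ vanishes. Proposition \ref{th lim S} gives the exponential formula only where $S_\infty>0$, and for a general $v\in\V^\star$ we may have $v(x)=S_0(x)$ at some points.

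I plan to handle this as follows. For $\nu\equiv0$, the first equation of \eqref{syst 0} gives $S^\star(t,x)=(S_0-v)(x)\exp\bigl(-\int_0^t\int_\X\beta(y)I^\star(\tau,y)\di y\di\tau\bigr)$, and the factor multiplying $S_0-v$ depends only on $t$. Since $I_0\not\equiv0$, we have $S_0-v\not\equiv0$ unless $m=\int_\X S_0$. In the latter case $S^\star_\infty\equiv 0$ and both sides of the identity equal $m$, so it holds trivially. Otherwise, the formula can be applied at some point $x_1$ with $S^\star_\infty(x_1)>0$. The ratio $S^\star_\infty/(S_0-v)$ is the same constant at every point where $S_0-v>0$, and at points where $S_0-v=0$ both sides of the pointwise formula vanish. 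So the pointwise formula holds everywhere, in particular at points where $S_0-v=0$.

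One remaining case is where $S_0-v$ is positive somewhere but $S^\star_\infty\equiv0$; I would exclude it via the integrability of $\int_0^\infty\int_\X\beta I^\star$, which follows from the Lemma giving \eqref{eq u} and the uniform positivity of $\mu$, so that the common factor has a positive limit. Apart from this point, I expect the argument to be routine.
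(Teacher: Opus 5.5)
Your proof is correct and follows the paper's argument. Since $\beta(x,y)=\beta(y)$, the exponent in \eqref{S infty opt} is independent of $x$, so $S^\star_\infty=(S_0-v)\,e^{\sigma_\infty(v)-\sigma_0(v)}\eta$; integrating over $\X$ and adding $\int_\X v=m$ gives the identity on each slice.

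Your extra treatment of points where $S^\star_\infty$ vanishes fills a point the paper passes over, because Proposition \ref{th lim S} only gives the formula where the limit is positive. That treatment uses the factorization $S^\star(t,x)=(S_0-v)(x)E(t)$ and the bound $\int_0^\infty\int_\X\beta I^\star<\infty$, which follows from $\partial_t(S^\star+I^\star)=-\mu I^\star$ and $\inf\mu>0$; that bound in fact gives $E(\infty)=e^{\sigma_\infty(v)-\sigma_0(v)}\eta>0$ directly. Your appeal to $I_0\not\equiv 0$ in that step is superfluous: the dichotomy comes simply from $S_0-v\geq 0$ and $\int_\X(S_0-v)=\int_\X S_0-m$.
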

\begin{proof}
    Let us start by observing that Lemma \eqref{req Sinfty} yields that
\begin{equation*}
     S^\star_\infty(x) = (S_0(x) - v(x))\exp\left(\int_{\mathcal X}\frac{\beta(y)}{\mu(y)}\,S^\star_\infty(y)\,\di y -\int_{\mathcal X}\frac{\beta(y)}{\mu(y)}\,(I_0(y) + S_0 (y)- v(y))\, \di y\right),
    \end{equation*}
which rewrites as
\begin{equation}\label{eq sigma}
         S^\star_\infty(x) = (S_0(x) - v(x))e^{\sigma_\infty(v) - \sigma_0(v)} \eta.
\end{equation}
    Observe that the quantity in the exponential in \eqref{eq sigma} does not depend on $x$ (but depends on $v$). Hence integrating \eqref{eq sigma} with respect to $x\in \X$, yields
    $$
    \int_\X S_\infty^\star =\left(\int_\X S_0-\int_\X v\right)e^{\sigma_\infty - \sigma_0}\eta.
    $$
    Therefore, \eqref{new opt I} holds true.
\end{proof}

In the next lemma, we give some monotonicity result for the functionals $\sigma_\infty,\sigma_0$.
\begin{lem}\label{lem sigma}
    Let $v_1,v_2 \in \V^\star$ be such  that $\sigma_0(v_1)\leq \sigma_0(v_2)$. Then, we have $\sigma_\infty(v_1)\geq \sigma_\infty(v_2)$.
\end{lem}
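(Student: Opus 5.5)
The plan is to reduce everything to a scalar equation, which is possible because $\beta(x,y)=\beta(y)$. By \eqref{eq sigma} from the proof of Lemma \ref{lem rewrite}, for any $v\in\V^\star$ we have $S^\star_\infty=(S_0-v)e^{\sigma_\infty(v)-\sigma_0(v)}\eta$. I will multiply this identity by $\beta/\mu$ and integrate over $\X$, which gives the closed relation
$$
\sigma_\infty(v)=\sigma_0(v)\,\eta\,e^{\sigma_\infty(v)-\sigma_0(v)},\qquad\text{that is}\qquad f(\sigma_\infty(v))=\eta\, f(\sigma_0(v)),\quad f(s):=s e^{-s}.
$$
The function $f$ is increasing on $[0,1]$, decreasing on $[1,+\infty)$, and $0<\eta<1$ since $I_0\not\equiv0$. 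So $\sigma_\infty(v)$ is one of the (at most two) roots of $f(s)=\eta f(\sigma_0(v))$, and the first task is to identify which root it is.

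Second, I will show that $\sigma_\infty(v)<1$, so that $\sigma_\infty(v)$ is always the smaller root, lying on the increasing branch of $f$. For this I reuse Step 2 of the proof of Proposition \ref{prop:tildeU}. The operator $\phi\mapsto\phi-\int_y\frac{\beta(y)}{\mu(y)}S^\star_\infty(y)\phi(y)\,\di y$ is rank-one perturbed identity; its principal eigenfunction is a constant and its principal eigenvalue is $1-\sigma_\infty(v)$. The argument with $e(t)=\int\frac{\phi}{\mu}I^\star(t,\cdot)$, which uses $I^\star(t,\cdot)\to0$ and $e(0)>0$, shows this eigenvalue is strictly positive. That argument only requires pointwise bounds, so it applies to any $v\in\V^\star$, continuous or not.

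Third, I will compare. Writing $\sigma_\infty=f_-^{-1}(\eta f(\sigma_0))$, where $f_-^{-1}$ is the increasing inverse of $f|_{[0,1]}$, the map $\sigma_0\mapsto\sigma_\infty$ is monotone in the same way as $f$. It is nonincreasing on $[1,\infty)$ and nondecreasing on $[0,1]$. So for $\sigma_0(v_1)\le\sigma_0(v_2)$ in the supercritical range $\sigma_0\ge1$ we get $f(\sigma_0(v_1))\ge f(\sigma_0(v_2))$, hence $\sigma_\infty(v_1)\ge\sigma_\infty(v_2)$, which is the claim.

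The main obstacle is exactly this regime issue. When $\sigma_0(v_i)<1$ the formula gives the opposite monotonicity; for small $\sigma_0$ one has $\sigma_\infty\approx\eta\sigma_0$. I therefore expect the proof to have to place $\sigma_0(v_1),\sigma_0(v_2)$ in $[1,\infty)$, or to compare via $f(\sigma_0)$ rather than $\sigma_0$ directly. I would first check whether the hypothesis $K<\min\{\frac{\mu}{\beta}\}\int\frac{\beta}{\mu}S_0$, together with the use of the lemma in the proof of Theorem \ref{th quali}, forces this regime. If it does not, I would restate the lemma as ``$f(\sigma_0(v_1))\ge f(\sigma_0(v_2))$ implies $\sigma_\infty(v_1)\ge\sigma_\infty(v_2)$'', which follows immediately from the two steps above.
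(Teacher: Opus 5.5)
Your reduction to $f(\sigma_\infty)=\eta f(\sigma_0)$ and your final step (on the decreasing branch of $f$) follow the paper's proof. The difference is in how you pick the root. The paper uses something lighter than your eigenvalue argument: $S^\star_\infty\le S_0-v$ gives $\sigma_\infty\le\sigma_0$, and any other root of $f(s)=\eta f(\sigma_0)$ is strictly larger than $\max\{1,\sigma_0\}$. Your route also works. However, the weight in $e(t)$ must be the eigenfunction of the adjoint operator, here $\beta$, not the constant. With $e(t)=\int_\X\frac{\beta}{\mu}I^\star(t,\cdot)$ one gets $e'(t)=\bigl(\int_\X\frac{\beta}{\mu}S^\star(t,\cdot)-1\bigr)\int_\X\beta I^\star(t,\cdot)$, so $\sigma_\infty\ge 1$ would make $e$ nondecreasing from $e(0)>0$, a contradiction.

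The regime issue you flag is a genuine defect of the statement, and the check you propose comes out negative. The hypothesis on $K$ only gives $\int_\X\frac{\beta}{\mu}v\le\max\{\frac{\beta}{\mu}\}K<\int_\X\frac{\beta}{\mu}S_0$, that is, $\sigma_0(v)>0$, not $\sigma_0(v)>1$. The paper's proof asserts $\sigma_0>1$ at exactly this point, and that inference is wrong.

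As stated, the lemma is false. Take $v_1=S_0$: then $\sigma_0(v_1)=\sigma_\infty(v_1)=0$, while any $v_2$ with $\sigma_0(v_2)>0$ has $\sigma_\infty(v_2)>0$. Counterexamples survive even under the constraint $\int_\X v\le K$ that the paper's proof implicitly uses. If $\int_\X\frac{\beta}{\mu}S_0<1$, every such $v$ has $\sigma_0(v)\in(0,1)$, and by your own computation the inequality is reversed. So no argument closes without an extra hypothesis such as $\sigma_0(v_1)\ge 1$, ensured for instance by $K<\min\{\frac{\mu}{\beta}\}\bigl(\int_\X\frac{\beta}{\mu}S_0-1\bigr)$. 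Under that hypothesis your proof is complete.

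Your fallback restatement in terms of $f(\sigma_0)$ is true, but it is not what is needed downstream. Proposition~\ref{prop m} and the proof of Theorem~\ref{th quali} only use the lemma to get $\sigma_\infty(v_1)-\sigma_0(v_1)\ge\sigma_\infty(v_2)-\sigma_0(v_2)$. In the subcritical regime your restatement gives nothing toward this. That weaker statement, however, holds in every regime. Differentiating $\ln\sigma_\infty-\sigma_\infty=\ln\sigma_0-\sigma_0+\ln\eta$ along the small root gives
\[
\frac{d\sigma_\infty}{d\sigma_0}=\frac{\sigma_\infty(1-\sigma_0)}{\sigma_0(1-\sigma_\infty)},\qquad \frac{d}{d\sigma_0}\bigl(\sigma_\infty-\sigma_0\bigr)=\frac{\sigma_\infty-\sigma_0}{\sigma_0(1-\sigma_\infty)}<0 .
\]
The first formula shows the sign change at $\sigma_0=1$ that you observed. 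The second is negative because $\sigma_\infty<\sigma_0$ and $\sigma_\infty<1$, the latter being exactly your second step. Replacing the lemma by this monotonicity of $\sigma_\infty-\sigma_0$ repairs Proposition~\ref{prop m} and Theorem~\ref{th quali} with no restriction on the size of $\sigma_0$.
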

\begin{proof}
 Let $v\in \V^\star$ be chosen. If we multiply \eqref{eq sigma} by $\frac{\beta(x)}{\mu(x)}$ and we integrate, we find that (we omit the dependence on $v$ in $\sigma_0,\sigma_\infty$ for now)
    $$
    \sigma_\infty e^{-\sigma_\infty} = \sigma_0 e^{-\sigma_0}\eta.
    $$
    Define the function $f(x) = x e^{-x}$. This function is increasing for $x\in [0,1]$, decreasing for $x>1$, and we have
    $$
    f(\sigma_\infty)=f(\sigma_0)\eta.
    $$
    This allows to find $\sigma_\infty$ knowing $\sigma_0$. Because $\eta<1$, there are two possible solutions $x$ that solve $f(x)=f(\sigma_0)\eta$, but only one of them is smaller than $\sigma_0$ (see the image below). Indeed, as $S_\infty^\star\leq S_0-v$, we have $\sigma_\infty\leq \sigma_0$.

    \begin{center}
    \includegraphics[scale=0.7]{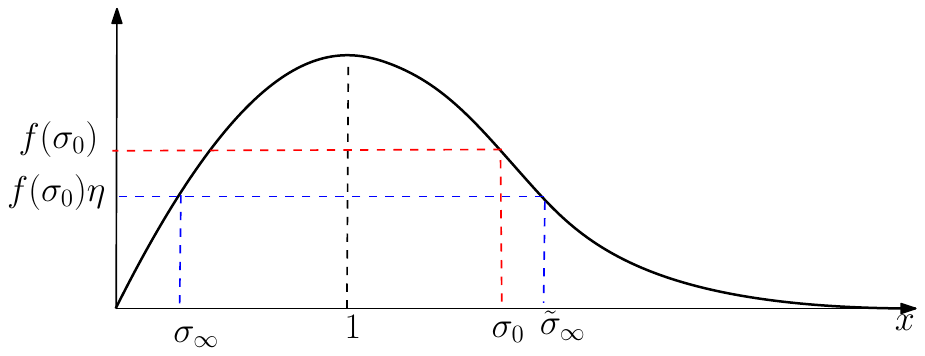}
    \end{center}
In addition, because of the hypothesis on $K$, we have $\int_y \frac{\beta}{\mu}v\leq \max\left\{\frac{\beta}{\mu}\right\} K < \int \frac{\beta}{\mu}S_0$, hence
$$
\sigma_0 = \int_y\frac{\beta(y)}{\mu(y)}(S_0(y)-v(y))\di y> 1.
$$
It is easy to see that decreasing $\sigma_0$, as long as $\sigma_0>1$, increases $\sigma_\infty$, hence the result.
\end{proof}

We are now in position to prove the following result, where we solve the optimization problem with the additional constraint that $\int_\X v = m$ is fixed.
\begin{prop}\label{prop m}
    Let $m\in [0,K]$ be fixed. Let $v_m$ be given by \eqref{vI}. We have that $v_m\in\V^\star$, $\int_\X v_m = m$ and
    $$
    \sup_{\substack{v\in\V^\star \\ \int_X v = m }}\mathcal{N}^\star(v)=\mathcal{N}^\star(v_m)
    $$
\end{prop}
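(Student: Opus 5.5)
With $m$ fixed, Lemma \ref{lem rewrite} gives, for every admissible $v$ with $\int_\X v = m$,
$$
\mathcal{N}^\star(v) = m + \Big(\int_\X S_0 - m\Big)\,\eta\, e^{\sigma_\infty(v)-\sigma_0(v)} .
$$
The prefactor $\int_\X S_0 - m$ does not depend on $v$, and it is nonnegative because $v\le S_0$. So maximizing $\mathcal{N}^\star$ on this constraint set amounts to maximizing $\sigma_\infty(v)-\sigma_0(v)$. The plan is to show that $v_m$ minimizes $\sigma_0$ on the set, and that a smaller $\sigma_0$ raises $\sigma_\infty - \sigma_0$.

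\textbf{Step 1: $v_m$ is admissible.} The map $s\mapsto \int_{\{\beta/\mu>s\}}S_0$ is continuous, because the level sets $\{\beta/\mu = s\}$ have measure zero. It is also nonincreasing, equals $\int_\X S_0$ for small $s$ and $0$ for large $s$. Since $m\le K<\int_\X S_0$ (this follows from the hypothesis on $K$), some $s_m$ exists. Then $0\le v_m\le S_0$ and $\int_\X v_m = m$.

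\textbf{Step 2: $v_m$ minimizes $\sigma_0$ (bathtub argument).} Write $\sigma_0(v) = \int \frac{\beta}{\mu}S_0 - \int \frac{\beta}{\mu} v$, so we must show that $v_m$ maximizes $\int \frac{\beta}{\mu}v$ under $0\le v\le S_0$ and $\int v = m$. For any such $v$, set $E = \{\beta/\mu > s_m\}$. Then
$$
\int \tfrac{\beta}{\mu}(v_m - v) = \int_E \Big(\tfrac{\beta}{\mu}-s_m\Big)(S_0 - v) + \int_{E^c}\Big(s_m-\tfrac{\beta}{\mu}\Big) v + s_m\int (v_m - v).
$$
Both integrals are nonnegative, and the last term vanishes because $\int v_m = \int v = m$. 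Hence $\sigma_0(v_m)\le \sigma_0(v)$.

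\textbf{Step 3: monotonicity of $\sigma_\infty - \sigma_0$.} Lemma \ref{lem sigma} gives $\sigma_\infty(v_m)\ge \sigma_\infty(v)$, and Step 2 gives $-\sigma_0(v_m)\ge -\sigma_0(v)$. Adding these, $\sigma_\infty(v_m)-\sigma_0(v_m)\ge \sigma_\infty(v)-\sigma_0(v)$, so $\mathcal{N}^\star(v_m)\ge \mathcal{N}^\star(v)$, which proves the proposition.

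The only delicate point is the use of Lemma \ref{lem sigma}. It needs $\sigma_0>1$ for all competitors and relies on $\sigma_\infty$ being the smaller root of $xe^{-x} = \eta\, \sigma_0 e^{-\sigma_0}$. Both facts were established there under the hypothesis on $K$, so nothing further is needed.

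If the zero-measure hypothesis on the level sets is dropped, $v_m$ would instead be defined as $S_0$ on $\{\beta/\mu>s_m\}$ plus a fraction of $S_0$ on the level set $\{\beta/\mu = s_m\}$, chosen to match the mass $m$. The bathtub computation in Step 2 is unchanged.
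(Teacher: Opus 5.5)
Your proof follows the paper's route: Lemma \ref{lem rewrite}, then the bathtub argument showing that $v_m$ minimizes $\sigma_0$, then Lemma \ref{lem sigma}. Steps 1 and 2 are fine, and Step 1 usefully supplies the existence of $s_m$ and the bound $K<\int_\X S_0$ that the paper leaves implicit.

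The gap is exactly at the point you flag as delicate and then dismiss. The hypothesis on $K$ only gives $\int_\X\frac{\beta}{\mu}v\le\max\{\frac{\beta}{\mu}\}K<\int_\X\frac{\beta}{\mu}S_0$, i.e. $\sigma_0(v)>0$. It does not give $\sigma_0(v)>1$; the paper's proof of Lemma \ref{lem sigma} makes the same unjustified jump. Without $\sigma_0\ge 1$, Lemma \ref{lem sigma} is false. Write $f(x)=xe^{-x}$ and let $h$ be the inverse of $f$ on $[0,1]$. Then $\sigma_\infty=h(\eta f(\sigma_0))$, which is \emph{increasing} in $\sigma_0$ on $(0,1)$. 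For instance, suppose $\int_\X\frac{\beta}{\mu}S_0<1$, which is compatible with all hypotheses of Theorem \ref{th quali}. Then every competitor has $\sigma_0(v)<1$. Any $v$ with $\int_\X v=m$ and $\sigma_0(v)>\sigma_0(v_m)$ then satisfies $\sigma_\infty(v)>\sigma_\infty(v_m)$. So the inequality $\sigma_\infty(v_m)\ge\sigma_\infty(v)$ that you take from Lemma \ref{lem sigma} in Step 3 fails.

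The proposition itself survives, because you only need monotonicity of $\sigma_\infty-\sigma_0$, not of $\sigma_\infty$, and that holds for every $\sigma_0>0$. Set $g(s)=h(\eta f(s))$; then $g(s)<\min\{s,1\}$. Differentiating $ge^{-g}=\eta se^{-s}$ gives
\[
\frac{d}{ds}\bigl(g(s)-s\bigr)=\frac{g(s)(1-s)}{s(1-g(s))}-1=\frac{g(s)-s}{s(1-g(s))}<0 .
\]
There is also a calculus-free route. The ratio $r=\sigma_\infty/\sigma_0$ satisfies $r=\eta e^{-\sigma_0(1-r)}$ and $e^{\sigma_\infty-\sigma_0}=r/\eta$. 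It is the unique zero in $(0,1)$ of the convex function $r\mapsto\eta e^{-\sigma_0(1-r)}-r$, which is positive at $0$ and negative at $1$. For each fixed $r<1$ this function is strictly decreasing in $\sigma_0$, so $r$, and hence $e^{\sigma_\infty-\sigma_0}$, decreases as $\sigma_0$ increases.

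Therefore $\sigma_0(v_m)\le\sigma_0(v)$ directly yields $\sigma_\infty(v_m)-\sigma_0(v_m)\ge\sigma_\infty(v)-\sigma_0(v)$. This uses only $\sigma_0>0$, which is exactly what the hypothesis on $K$ provides. Replacing your Step 3 by this argument completes the proof.
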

\begin{proof}

{\em Step 1. A sufficient condition.}\\

Let us start with observing that it is sufficient to prove that
\begin{equation}\label{opt sigma}
\sigma_0(v_m)\leq \sigma_0(v),\quad \forall v\in\V^\star,\ \int_\X v = m.
\end{equation}
Indeed, if \eqref{opt sigma} holds true, we have, using Lemma \ref{lem sigma},
$$
\sigma_\infty(v_m)-\sigma_0(v_m)\geq \sigma_\infty(v) - \sigma_0(v),\quad \forall v\in\V^\star,\ \int_\X v = m.
$$
Therefore, thanks to Lemma \ref{lem rewrite}, and because $m\leq \int_\X S_0$, we would have (if \eqref{opt sigma} holds true)
$$
m + \left(\int_\X S_0 - m\right)e^{\sigma_\infty(v_m) - \sigma_0(v_m)}\eta\geq m + \left(\int_\X S_0 - m\right)e^{\sigma_\infty(v) - \sigma_0(v)}\eta,\quad \forall v\in\V^\star,\ \int_\X v = m,
$$
that is,
$$
\mathcal{N}^\star(v_m)\geq \mathcal{N}^\star(v),\quad \forall v\in\V^\star,\ \int_\X v = m.
$$
Let us then prove that \eqref{opt sigma} holds true.

\medskip
{\em Step 2. Optimizing $\sigma_0$.}\\

We want to prove that \eqref{opt sigma} holds true. It is sufficient to show that, for each $v\in \V^\star$ such that $\int_\X v = m$, we have
$$
\int_\X v(y)\frac{\beta(y)}{\mu(y)}\di y\leq \int_\X v_m(y)\frac{\beta(y)}{\mu(y)}\di y.
$$
It turns out that this is a consequence of the {\em bathtub principle}, see \cite{lieb2001analysis}. We reproduce the argument here for completeness. We want to prove that, for all $v\in \V^\star$ such that $\int_\X v = m$, we have
$$
\int_\X(v_m-v)\frac{\beta}{\mu}\geq 0.
$$
We have
\begin{equation*}
\begin{array}{cc}
\int_\X(v_m - v)\frac{\beta}{\mu}&=\int_{\frac{\beta}{\mu}>s_m} (v_m - v_0)\frac{\beta}{\mu}+\int_{\frac{\beta}{\mu}<s_m} (v_m - v)\frac{\beta}{\mu} \\
&=\int_{\frac{\beta}{\mu}>s_m} (S_0 - v)\frac{\beta}{\mu}-\int_{\frac{\beta}{\mu}<s_m}  v\frac{\beta}{\mu}\\
&\geq s_m\left( \int_{\frac{\beta}{\mu}>s_m} (S_0 - v)-\int_{\frac{\beta}{\mu}<s_m}  v \right)\\
&= s_m\left( \int_{\frac{\beta}{\mu}>s_m} S_0 - \int_\X v  \right)\\
&=0.
\end{array}
\end{equation*}
We used in the computations that, for any function $v\in \V^\star$, we have $v\leq S_0$ and that, by definition of $s_m$, we have $\int_{\frac{\beta}{\mu}>s_m} S_0 = m = \int_\X v$ here.

This concludes the proof: the function $v_m$ maximizes $\sigma_0$, hence it maximizes $\mathcal{N}^\star$, among all functions $v\in \V^\star$ such that $\int_\X v =m$.
\end{proof}


\begin{rmrk}
    We used in the proof the assumption that, for each $s\in \R$, the set $\{\frac{\beta}{\mu}=s\}$ is of measure zero. This assumption can be removed, up to defining the functions $v_m$ in the following way:
    $$
    v_m = S_0\left(\mathds{1}_{\frac{\beta}{\mu}> s_m}+c \mathds{1}_{\frac{\beta}{\mu}= s_m}\right),
    $$
    where $s_m = \inf\{t>0 \ : \ \int_{\frac{\beta}{\mu}>t}S_0>m\} $ and $c$ is such that $\int_\X v_m = m$.

    The rest of the proof would be similar, up to some additional computations in Step $2$ of Proposition \ref{prop m}.
\end{rmrk}

Proposition \ref{prop m} shows that, for a given $m\in[0,K]$ fixed, the vaccination function $v_m$ maximizes the number of surviving individuals in problem \eqref{ocp:equivalent}.

To fully prove Theorem \ref{th quali}, it remains to show that the vaccination strategy $v_K$ saves more individuals than the strategy $v_m$, for $m\leq K$.

From now on, we denote $S_\infty^m$ the limiting state for \eqref{syst 0} with $v=v_m$. It satisfies
$$
S_\infty^m(x) = (S_0(x) - v_m(x))\exp\left(\int_{\mathcal X}\frac{\beta(y)}{\mu(y)}\,S^m_\infty(y)\,\di y -\int_{\mathcal X}\frac{\beta(y)}{\mu(y)}\,(I_0(y) + S_0 (y)- v_m(y))\, \di y\right),
$$
hence
\begin{equation}\label{S infty m}
S_\infty^m(x) = \begin{cases}
    0 \quad \text{for}\ x \ \text{such that} \ \frac{\beta(x)}{\mu(x)}>s_m,\\
    S_0(x)e^{\sigma_\infty(v_m)-\sigma_0(v_m)}\eta
    ,\quad \text{elsewhere}.
\end{cases}
\end{equation}

\begin{proof}[Proof of Theorem \ref{th quali}]
    We have proven that
$$
\sup_{\substack{v\in \V^\star \\ \int_\X v =m }}\mathcal{N}^\star(v) =  \mathcal{N}^\star(v_m),
$$
where $v_m$ is given by \eqref{vI}.

Now, we want to prove that $v_K$ (that is, $v_m$ with $m=K$) reaches the maximum in \eqref{new opt I}. To do so, let us prove that $m\mapsto \mathcal{N}^\star(v_m)$ is nondecreasing. Let $0\leq m_1<m_2 \leq K$ be fixed. Let us prove that
$$
\mathcal{N}^\star(v_{m_1})\leq \mathcal{N}^\star(v_{m_2}),
$$
that is, we want to prove that
$$
m_1 + \int_X S_\infty^{m_1}\leq m_2 + \int_X S_\infty^{m_2}.
$$
We introduce the three following subsets of $\X$:
$$
A = \{x\in \X \ :\ v_{m_1}(x)  = S_0(x)\},\ B = \{x\in \X \ :\ v_{m_2}(x) = S_0(x) \ \text{and}\ v_{m_1}(x) = 0\} ,\ C=  \X \backslash(A\cup B).
$$
We could also write that $A=\{\frac{\beta}{\mu}>s_{m_1}\}$, $B=\{s_{m_2}<\frac{\beta}{\mu}\leq s_{m_1}\}$. 
Observe that, as $m_1<m_2$, we have $s_{m_1}>s_{m_2}$.

Owing to the shape of $v_{m_1},v_{m_2}$ and to \eqref{S infty m}, we have:
\begin{itemize}
    \item On the set $A$: $v_{m_1}=v_{m_2}=S_0>0$ and $S_\infty^{m_1}=S_\infty^{m_2}=0$.
    \item On the set $B$: $v_{m_1}=0$ and $v_{m_2}= S_0>0$ and $S_\infty^{m_2}=0$.
    \item On the set $C$:  $v_{m_1}=v_{m_2}=0$.
\end{itemize}
Let us add that we also have
$$
S_\infty^{m_1}\leq S_\infty^{m_2} \quad \text{on }\ C.
$$
Indeed, on the set $C$, for $k=1,2$, we have
$$
S_\infty^{m_k} = S_0 e^{\sigma_\infty(v_{m_k})-\sigma_0(v_{m_k})} \eta.
$$
This comes from the observation that, because $v_{m_2}\geq v_{m_1}$, we have $\sigma_0(v_{m_2})\leq\sigma_0(v_{m_1})$ and therefore, owing to Lemma \ref{lem sigma}, we have $\sigma_\infty(v_{m_2})-\sigma_0(v_{m_2})\geq \sigma_\infty(v_{m_1})-\sigma_0(v_{m_1})$.\\

Using all these remarks, we get
$$
\mathcal{N}^\star(v_{m_1}) = \int_A v_{m_1} + \int_B S_\infty^{m_1} + \int_C S_\infty^{m_1}
$$
and
$$
\mathcal{N}^\star(v_{m_2}) = \int_A v_{m_2} + \int_B v_{m_2} + \int_C S_\infty^{m_2}.
$$
On the set $A$, $v_{m_1} = v_{m_2}=S_0$. On the set $B$, we have $v_{m_2} = S_0 $ which is larger than $S_\infty^{m_1}$. Finally, we showed that on the set $C$, we have $S_\infty^{m_1}\leq S_\infty^{m_2}$.

Putting all that together, we find that $\mathcal{N}^\star(v_{m_1})\leq \mathcal{N}^\star(v_{m_2})$. This concludes the proof.
\end{proof}

\textbf{Acknowledgement. } E.P. acknowledges the support of Turin Polytechnic which partially funded a visiting period during the PhD that laid the foundation for the present work.
This study contributes to the IdEx Université de Paris ANR-18-IDEX-0001. The research leading to these results has received funding from the ANR project “ReaCh” (ANR-23-CE40-0023-01).
This work received support from the Maths-ArboV project which is funded by the PEPR Maths-Vives (managed by the National Research Agency (ANR), under the France 2030 program, reference ANR 23 EXMA 0001).

\bibliographystyle{siam}
\bibliography{Bibliografia_sirv}
\end{document}